\documentclass[]{article}

\usepackage{geometry}
\usepackage{booktabs}
\usepackage{graphicx}
\usepackage{placeins}

\usepackage{mathtools} 
\usepackage{mathrsfs}
\usepackage{amsmath}
\usepackage{amsfonts}
\usepackage{amssymb}
\usepackage{xcolor}

\usepackage{amsthm}
\theoremstyle{definition}

\newtheorem{thm}{Theorem}[section]
\newtheorem{prop}[thm]{Proposition}

\newtheorem{lemma}[thm]{Lemma}

\newtheorem{defn}[thm]{Definition}
\newtheorem{rmk}[thm]{Remark}

\usepackage{csquotes}
\usepackage[
    backend=biber,
    style=numeric,
    sorting=none,
    doi=true,
    url=false,
    isbn=false
]{biblatex}
\usepackage{hyperref}
\hypersetup{hidelinks}

\newcommand{\diag}{\text{diag}}
\newcommand{\diam}{\text{diam}}

\newcommand{\norm}[1]{\left\lVert#1\right\rVert}

\title{Inverse Wave Problems on Graphs}
\author{Amir Moradifam\footnote{Department of Mathematics, University of California, Riverside, California, USA. E-mail: amirm@ucr.edu} \and Oliver P. Waite\footnote{Department of Mathematics, University of California, Riverside, California, USA. E-mail: oliver.waite@email.ucr.edu}}
\date{}

\begin{document}

\maketitle

\begin{abstract}
    We study inverse wave problems on finite graphs from time-dependent measurements on a prescribed set of vertices. With the graph geometry known, we consider recovery of either the initial signal or the vertex wave speeds when the other is given. Initial-signal recovery is characterized by observability of the associated finite-dimensional system, while a single experiment determines the vertex wave speeds under a zero forcing condition and a natural nonharmonicity assumption. We establish stability estimates for both recovery problems, develop a nonlinear least-squares reconstruction method, and extend the results to positively weighted graphs. Numerical simulations illustrate the reconstruction and examine the effects of observation geometry, time horizon, coefficient localization, and measurement noise on its performance. Our results are motivated both by numerical approaches to continuous inverse wave problems, where spatial discretization leads naturally to graph-based systems, and by applications in which the underlying dynamics are inherently discrete and evolve on networks.
\end{abstract}

\section{Introduction}

Inverse wave problems arise when the evolution of a wave is observed on all or part of the boundary of a region and one seeks to recover the initial source, the wave speed, or both. In the continuous setting, a representative model is
\begin{equation}
\left\{
\begin{aligned}
u_{tt}(x,t) &= c(x)^2\Delta u(x,t),
    && (x,t)\in\mathbb{R}^d\times(0,T),\\
u(x,0) &= f(x),
    && x\in\mathbb{R}^d,\\
u_t(x,0) &= 0,
    && x\in\mathbb{R}^d.
\end{aligned}
\right.
\label{eq:wave-equation}
\end{equation}
Here $c(x)>0$ is the wave speed and $f$ is the initial source. We assume that $f$ is supported in a bounded region $\Omega\subset\mathbb{R}^d$ and that $c$ is known outside $\Omega$. For a fixed observation time $T>0$, the full boundary measurement is
$$
\Lambda(f,c):=u|_{(0,T)\times\partial\Omega}.
$$
More generally, if $\Sigma\subset\partial\Omega$ is the observed portion of the boundary, one may consider the partial measurement
$$
\Lambda_{\Sigma}(f,c):=u|_{(0,T)\times\Sigma}.
$$
The corresponding inverse problems are to recover $f$ when $c$ is known, to recover $c$ when $f$ is known, or to recover both $f$ and $c$ simultaneously from these measurements.

There is a substantial literature on inverse wave problems of this type. When the wave speed is known, recovery of the initial source is a classical inverse source problem and is closely related to observability; see, for example, \cite{StefanovUhlmann}. Related boundary Cauchy reconstruction problems are studied in \cite{Demchenko}. When the initial source is known, Stefanov and Uhlmann proved uniqueness and stability for recovery of the wave speed from one measurement under geometric and nondegeneracy assumptions \cite{StefanovUhlmann}. A Carleman-based numerical reconstruction method for a related single boundary-flux measurement problem was developed in \cite{BaudouinDeBuhanErvedozaOsses}. In contrast, simultaneous recovery of $f$ and $c$ from the same measurements remains open in general. Representative positive results under additional structural assumptions include \cite{FinchHickmann,LiuUhlmann,KnoxMoradifam,KianUhlmann,KianLiu,Moradifam}. A one-dimensional passive-measurement result appears in \cite{Feizmohammadi}. Moreover, the linearized simultaneous recovery problem is unstable in every scale of Sobolev spaces \cite{StefanovUhlmannInstability}.

To solve these continuous inverse problems numerically, one typically replaces the spatial domain by a finite mesh. Writing the wave equation in the equivalent mass form
$$
m(x)u_{tt}(x,t)-\Delta u(x,t)=0,
\qquad
m(x)=c(x)^{-2},
$$
let $\Gamma_h=(V_h,E_h)$ be a mesh graph with nodes $x_1^h,\ldots,x_{N_h}^h$. Let $U_h(t)\in\mathbb{R}^{N_h}$ denote the vector of discrete nodal unknowns, with
$$
\bigl(U_h(t)\bigr)_i\approx u(x_i^h,t).
$$
A finite-difference or mass-lumped finite-element discretization then leads to a system of the form
$$
M_hU_{h,tt}(t)+L_hU_h(t)=0,
$$
where $M_h$ is a diagonal discrete mass matrix and $L_h$ is the discrete spatial operator. On a uniform finite-difference mesh, for example,
$$
M_h=\operatorname{diag}\bigl(m(x_1^h),\ldots,m(x_{N_h}^h)\bigr),
\qquad
L_h=h^{-2}L_{\Gamma_h},
$$
away from the outer boundary of the computational mesh. Thus inverse wave problems on graphs arise naturally in numerical discretizations of continuous inverse wave problems.

The same finite-dimensional wave systems also arise in models whose underlying structure is intrinsically discrete or networked. Wave and hyperbolic PDE dynamics on graphs have been used to design or reinterpret message-passing architectures in graph neural networks \cite{BhaskarEtAl,YueEtAl2025a,YueEtAl2025b}. Motivated by both settings, let $\Gamma=(V,E)$ be a graph with $N$ vertices and graph Laplacian $L$. We consider
\begin{equation}
\left\{
\begin{aligned}
X_{tt}(t) &= -C^2LX(t),
    && t\geq0,\\
X(0) &= f,\\
X_t(0) &= 0,
\end{aligned}
\right.
\label{eq:intro-graph-wave}
\end{equation}
where $f\in\mathbb{R}^N$ is the initial signal and
$$
C=\operatorname{diag}(c_1,\ldots,c_N)
$$
contains the vertex wave speeds.

A direct physical realization is provided by an ideal lossless inductor--capacitor network. Associate to each vertex $i$ a capacitor of capacitance $\kappa_i>0$ connected to ground, and to each edge $\{i,j\}$ an inductor of inductance $\ell_{ij}>0$. If $V_i(t)$ denotes the voltage at vertex $i$, Kirchhoff's laws lead to
$$
D_\kappa V_{tt}+L_\omega V=0,
\qquad
D_\kappa=\operatorname{diag}(\kappa_1,\ldots,\kappa_N),
\qquad
\omega_{ij}=\ell_{ij}^{-1},
$$
where $L_\omega$ is the positively weighted graph Laplacian. The corresponding graph wave model for an inductor--capacitor network is derived in Section~2 of \cite{CaputoKnippelSimo}. The graph notation used here is introduced in Section~\ref{sec:background}, and the positively weighted Laplacian is defined in Section~\ref{sec:weighted}. Setting $c_i=\kappa_i^{-1/2}$ gives the weighted graph wave equation considered below. In this interpretation, recovering the initial signal amounts to reconstructing an unknown initial voltage distribution from measurements at selected circuit nodes, while recovering the vertex wave speeds is equivalent to determining the unknown nodal capacitances. The choice of observation vertices can therefore also be viewed as a sensor placement problem.

Inverse problems on finite graphs have also been studied under substantially different data models. Draper et al.~\cite{Draper} consider the recovery of matrix-valued edge or nodal parameters from boundary-response data, with applications to mass-spring-damper networks, and analyze the Jacobian of the parameter-to-data map and questions of local identifiability. Bl{\aa}sten et al.~\cite{BlastenIsozakiLassasLu} study a discrete Gelfand inverse problem in which the graph structure and weights are recovered from boundary spectral data under geometric hypotheses. More recently, Li et al.~\cite{LiGaoGengYang} considered recovery of interior vertex weights on finite weighted graphs from Neumann boundary spectral data using a discrete boundary control approach. Our data model is different. The graph geometry is known, the unknown coefficients are scalar vertex wave speeds, and the available data consist only of the time-dependent trace generated by a single prescribed initial displacement on a set of observation vertices. We show that this single experiment determines all vertex wave speeds under a zero forcing condition and the nonharmonicity assumption
$$
(Lf)_i\neq0
\qquad
\text{for every }i\in V.
$$
The same propagation mechanism also yields injectivity of the linearized measurement map and Lipschitz stability on compact admissible classes.

The inverse problems considered here exhibit discrete counterparts of familiar mechanisms from the continuum. Observability governs recovery of the initial state, while propagation from the observation set together with a nondegeneracy condition on the initial signal governs recovery of the vertex wave speeds. We establish uniqueness and stability results for both problems. For recovery of the initial signal, we relate uniqueness to observability of the associated finite-dimensional wave system and identify conditions on the observation set that guarantee recovery. For recovery of the vertex wave speeds, we show that a single experiment is sufficient under the zero forcing condition and the nonharmonicity assumption above. In particular, the uniqueness result holds on every observation interval of positive length, although the stability constants and numerical performance may depend on the observation time.

We also develop a nonlinear least-squares method for reconstructing the vertex wave speeds and analyze its consistency under temporal discretization. Numerical experiments illustrate the reconstruction and examine the effects of observation geometry, time horizon, coefficient localization, and measurement noise on its performance. The main results and the reconstruction framework extend to positively weighted graphs.

The semidiscretization above also suggests a conditional link back to the continuous inverse problems. Suppose that a family of spatial discretizations is convergent for the forward problem. If the associated discrete inverse stability constants remain uniformly bounded as $h\to0$, then forward consistency together with the stability estimates proved here yields convergence of the discrete reconstructions to the corresponding continuous unknown. Such uniformity is not automatic. For standard finite-difference semidiscretizations of the wave equation, high-frequency numerical modes can cause the observability constants to deteriorate as the mesh is refined \cite{MaricaZuazua}. Establishing uniform inverse stability for a particular family of discretizations is a separate problem and is not pursued here.

The remainder of the paper is organized as follows. Section~\ref{sec:background} introduces the graph notation and the continuous and discrete wave models. Section~\ref{sec:source-recovery} treats initial-signal recovery and its stability. Section~\ref{sec:speed-recovery} establishes uniqueness and stability for speed recovery and presents the reconstruction algorithm. Section~\ref{sec:weighted} extends the results to edge-weighted graphs. Section~\ref{sec:numerics} presents the numerical implementation and experiments.

\section{Preliminaries}
\label{sec:background}

In this section, we introduce the graph-theoretic notation and the continuous and discrete wave models used throughout the paper. We also define the observation sets and graph-theoretic notions that will play a central role in the inverse problems studied in the subsequent sections.

\subsection{Graph notation}

We begin with the basic notation for finite graphs. Throughout the paper, all graphs are assumed to be finite, simple, and undirected.

\begin{defn}
An ordered pair $\Gamma=(V,E)$ is a simple undirected graph if
$$
V=\{1,\ldots,N\}\subseteq\mathbb{N}
$$
and
$$
E\subseteq\bigl\{\{i,j\}\mid i,j\in V,\ i<j\bigr\}.
$$
The set $V$ is called the vertex set and $E$ the edge set.
\end{defn}

\begin{defn}
The adjacency matrix $A_\Gamma\in\mathbb{R}^{N\times N}$ is defined by
$$
(A_\Gamma)_{ij}:=
\begin{cases}
1, & \text{if }\{i,j\}\in E,\\
0, & \text{otherwise}.
\end{cases}
$$
The degree matrix is
$$
D_\Gamma=\operatorname{diag}(d_1,\ldots,d_N),
\qquad
d_i=\sum_{j=1}^N(A_\Gamma)_{ij}.
$$
\end{defn}

\begin{defn}
The graph Laplacian is the matrix
$$
L=D_\Gamma-A_\Gamma.
$$
\end{defn}

For a simple graph, the off-diagonal entries of $L$ determine adjacency and the diagonal entries determine the vertex degrees. Thus $L$ determines the graph and will be regarded throughout as encoding its geometry.

A scalar graph signal on $\Gamma$ is a vector $X\in\mathbb{R}^N$, where $X_i$ denotes the value associated with vertex $i$. Throughout this paper, we consider scalar graph signals.

\begin{defn}
Let $B\subseteq V$. Initially color the vertices in $B$ black and the remaining vertices white. If a black vertex has exactly one white neighbor, that neighbor may be changed to black. The set $B$ is called a zero forcing set if repeated application of this rule eventually colors every vertex black. A zero forcing sequence is then a sequence $(S_k)_{k=0}^K$ with $S_0=B$ and $S_K=V$ such that, at each stage, a vertex $u\in S_k$ forces a vertex $w\in S_{k+1}\setminus S_k$. In this case one may say that $S_k$ forces $S_{k + 1}$ and $u$ forces $w$.  
\end{defn}

The zero forcing condition will be used below to describe how information measured on the observation set can propagate through the graph. There is extensive literature on the zero forcing condition and its applications to the minimum rank problem and the inverse eigenvalue problem for graphs. For a comprehensive treatment of these subjects, see \cite{HogbenLinShader2022}. 

\begin{defn}
Let $\Gamma$ be a graph with graph Laplacian $L$, and let
$$
X:[0,\infty)\to\mathbb{R}^N
$$
be a time-dependent graph signal. The graph wave equation is
\begin{equation}
\left\{
\begin{aligned}
X_{tt}(t) &= -C^2LX(t),
    && t\geq0,\\
X(0) &= f,\\
X_t(0) &= 0,
\end{aligned}
\right.
\label{eq:graph-wave-equation}
\end{equation}
where
$$
C=\operatorname{diag}(c_1,\ldots,c_N)
$$
is the positive diagonal matrix of vertex wave speeds and $f\in\mathbb{R}^N$ is the initial signal.
\end{defn}

Equation \eqref{eq:graph-wave-equation} is a finite-dimensional system of ordinary differential equations in time, with the graph Laplacian describing the spatial coupling between vertices. The associated inverse problems nevertheless retain nontrivial questions of identifiability and stability.

The sign in \eqref{eq:graph-wave-equation} reflects the convention used for the graph Laplacian. The matrix $L=D_\Gamma-A_\Gamma$ is positive semidefinite, whereas the continuous Laplacian
$$
\Delta=\sum_{k=1}^d\frac{\partial^2}{\partial x_k^2}
$$
is negative semidefinite on the usual function spaces. Thus $-L$ plays the role of the continuous operator $\Delta$.

Since \eqref{eq:graph-wave-equation} is a finite-dimensional linear system with constant coefficients, standard ODE theory guarantees a unique solution for every initial signal $f\in\mathbb{R}^N$ and every positive diagonal speed matrix $C$. See, for example, \cite{ODE}.

\begin{defn}
Let $\Gamma=(V,E)$ be a graph with known geometry, and let $L$ denote its graph Laplacian. Let $B\subseteq V$ be a designated set of observation vertices, and let
$$
R_B:\mathbb{R}^N\to\mathbb{R}^{|B|}
$$
denote restriction to the coordinates indexed by $B$. For any admissible initial signal $f$ and positive diagonal speed matrix $C$, let $X$ be the solution of \eqref{eq:graph-wave-equation}. The graph observation map is defined by
$$
\Phi_B(f,C,L):=R_BX.
$$
Throughout the inverse problems considered in this paper, the graph geometry is assumed known, and hence $L$ is fixed. When the vertex wave speeds are also known, we write
$$
\Phi_B^{C,L}(f):=\Phi_B(f,C,L)
$$
for the map from the initial signal to the observed time trace. When the initial signal is known, we write
$$
\Phi_B^{f,L}(C):=\Phi_B(f,C,L)
$$
for the map from the vertex wave speeds to the observed time trace.
\end{defn}
	
\section{Initial-signal recovery}

\label{sec:source-recovery}

We first consider recovery of the initial signal $f$ when the speed matrix $C$ and graph Laplacian $L$ are known. This problem can be formulated within the classical theory of observability for finite-dimensional linear systems. We derive the corresponding observability criterion, an explicit reconstruction formula, and a stability estimate for the graph wave equation. We then relate observability to the geometry of the observation set and show that every zero forcing observation set guarantees observability for any positive diagonal speed matrix $C$. Zero forcing has previously been used in the study of controllability of graph-structured dynamical systems \cite{BurgarthEtAl,MonshizadehEtAl}, while here it is used in the inverse problem of recovering the initial signal from partial time-dependent measurements.

\subsection{Observability and reconstruction}\label{section:f-unique}

	Let $\Gamma$ be an undirected, connected graph with graph Laplacian $L$, and let
$$
C=\operatorname{diag}(c_1,\ldots,c_N),
\qquad c_i>0.
$$
Suppose that $X(t)$ satisfies the graph wave equation \eqref{eq:graph-wave-equation}, and define the state vector
$$
x(t):=
\begin{pmatrix}
X(t)\\
X_t(t)
\end{pmatrix},
\qquad
A:=
\begin{pmatrix}
0 & I_N\\
-C^2L & 0
\end{pmatrix}.
$$
Let $B\subseteq V$ be the set of observation vertices, and let
$$
R_B:\mathbb{R}^N\to\mathbb{R}^{|B|}
$$
denote restriction to the coordinates indexed by $B$. The measured output is
$$
y(t):=R_BX(t).
$$
If
$$
H_B:=R_B
\begin{pmatrix}
I_N & 0
\end{pmatrix},
$$
then $A\in\mathbb{R}^{2N\times2N}$, $H_B\in\mathbb{R}^{|B|\times2N}$, and
\eqref{eq:graph-wave-equation} can be written as the time-invariant finite-dimensional linear output system
\begin{equation}
\left\{
\begin{aligned}
\dot{x}(t) &= Ax(t),\\
y(t) &= H_Bx(t).
\end{aligned}
\right.
\label{eq:linear-output-system}
\end{equation}
    We recall the standard observability notions needed below.
	\begin{defn}
		A pair $(A,H_B)$ is observable if there exists a finite $t_1>0$ such that knowledge of $y$ on $[0,t_1)$ uniquely determines every initial state $x(0)$. Equivalently, $H_Be^{At}v=0$ for all $t\in[0,t_1)$ implies $v=0$. 
	\end{defn}
    
	\begin{defn}
		The observability Gramian of a linear system is the matrix
		\begin{equation*}
			W_o(t_1) := \int_0^{t_1} e^{A^{T} t} H_B^TH_Be^{At} \; d t
		\end{equation*}
	\end{defn}

    The observability Gramian is symmetric and positive semidefinite. In the present setting it is a $2N \times 2N$ matrix. Its quadratic form measures the total squared magnitude of the measured signal over the interval $[0, t_1)$. The Gramian is invertible precisely when every nonzero initial state produces a nonzero measured signal. Thus, it fails to be invertible exactly when there is a nontrivial initial state whose measured trace vanishes identically on $[0,t_1)$. Small eigenvalues of the Gramian correspond to initial states that are only weakly visible in the observations.

    We use the following standard characterization of observability.
    
	\begin{thm}[\cite{Chen} Theorem 6.O1, p. 156]
        \label{thm:observability-characterization}
		The following are equivalent:
		\begin{enumerate}
			\item the pair $(A, H_B)$ is observable; 
			\item for some $t_1 > 0$, the observability Gramian is nonsingular; 
			\item the matrix 
			\begin{equation*}
				\begin{pmatrix}
					A - \lambda I \\
					H_B
				\end{pmatrix}
			\end{equation*}
			has full column rank for every eigenvalue $\lambda$ of $A$.
		\end{enumerate}
\end{thm}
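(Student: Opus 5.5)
We prove the equivalences in the order (1)$\Leftrightarrow$(2), then (1)$\Leftrightarrow$(3). Throughout, $A$ is the general $2N\times 2N$ matrix and $H_B$ the general output matrix; the wave structure plays no role. The key tool is the analyticity of $t\mapsto H_Be^{At}v$. Since this function is real-analytic (indeed a finite sum of exponential polynomials), it vanishes on an interval $[0,t_1)$ with $t_1>0$ if and only if it vanishes identically. Differentiating at $t=0$, this happens if and only if $H_BA^kv=0$ for all $k\ge0$. By Cayley--Hamilton it suffices to take $0\le k\le 2N-1$. Hence observability is independent of $t_1$ and is equivalent to the trivial kernel of the Kalman matrix $\mathcal O=\bigl(H_B^T,(H_BA)^T,\dots,(H_BA^{2N-1})^T\bigr)^T$.

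\textbf{(1)$\Leftrightarrow$(2).} Compute the quadratic form
$$v^TW_o(t_1)v=\int_0^{t_1}\norm{H_Be^{At}v}^2\,dt.$$
$W_o(t_1)$ is symmetric positive semidefinite, so it is singular exactly when some $v\neq0$ has $v^TW_o(t_1)v=0$. By continuity of the integrand, this means $H_Be^{At}v\equiv0$ on $[0,t_1)$. So nonsingularity for some $t_1$ is precisely the definition of observability. By the analyticity remark, it then holds for every $t_1>0$. Nonsingularity also gives the reconstruction formula
$$x(0)=W_o(t_1)^{-1}\int_0^{t_1}e^{A^Tt}H_B^Ty(t)\,dt,$$
which will be useful later.

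\textbf{(1)$\Rightarrow$(3).} Argue by contraposition. Work over $\mathbb C$, since eigenvalues of $A$ may be complex; real kernels are trivial iff complex ones are, because $A$ and $H_B$ are real. If the stacked matrix loses rank at an eigenvalue $\lambda$, pick $v\neq0$ with $Av=\lambda v$ and $H_Bv=0$. Then $H_Be^{At}v=e^{\lambda t}H_Bv=0$ for all $t$. The real and imaginary parts of $v$ give a nonzero real unobservable initial state, contradicting (1).

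\textbf{(3)$\Rightarrow$(1).} This is the main obstacle. Suppose observability fails and let $\mathcal N=\ker\mathcal O$, a nonzero subspace. $\mathcal N$ is $A$-invariant, because $H_BA^k(Av)=H_BA^{k+1}v$ and Cayley--Hamilton handles $k+1=2N$. So $A|_{\mathcal N}$, viewed over $\mathbb C$, has an eigenvector $v\in\mathcal N\otimes\mathbb C$ with $Av=\lambda v$. Since $v\in\mathcal N$, in particular $H_Bv=0$. Hence $\begin{pmatrix}A-\lambda I\\ H_B\end{pmatrix}v=0$ with $\lambda$ an eigenvalue of $A$, contradicting (3).

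The only care needed is in this step. One must check the invariance of $\mathcal N$ (via Cayley--Hamilton) and the passage between real and complex kernels. Both are routine.
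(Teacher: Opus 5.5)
The paper does not prove this theorem; it quotes it from Chen. There is therefore no in-paper argument to compare against, and your proposal is a correct, self-contained proof.

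Each step checks out:
\begin{itemize}
\item The identity $v^TW_o(t_1)v=\int_0^{t_1}\norm{H_Be^{At}v}^2\,dt$, together with positive semidefiniteness, turns (2) into a literal restatement of the definition of observability.
\item The eigenvector argument, with the passage to real and imaginary parts, gives (1)$\Rightarrow$(3).
\item The $A$-invariance of $\ker\mathcal O$ yields a complex eigenvector of $A$ annihilated by $H_B$, which gives (3)$\Rightarrow$(1).
\end{itemize}

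In the cited source, the result is the observability dual of the controllability theorem. The hard direction, from the PBH condition back to observability, goes through the Kalman rank matrix and the Kalman canonical decomposition into observable and unobservable blocks, and then uses an eigenvector of the unobservable block.

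Your argument is the coordinate-free version of that step. The subspace $\ker\mathcal O$ is exactly what the decomposition isolates, and you only need the fact that a nonzero $A$-invariant subspace contains a (complex) eigenvector. This lets you avoid both duality and the change of basis.

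Your route also makes explicit that observability does not depend on $t_1$. This is the same analyticity fact the paper later invokes separately in Lemma~\ref{lem:displacement-observability} and the proposition that follows it. In exchange, the textbook route produces the Kalman rank test and the canonical form as by-products.
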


Previous work has shown that zero forcing sets provide sufficient sets of control vertices for broad classes of linear dynamical systems whose coupling structure is described by a graph \cite{BurgarthEtAl,MonshizadehEtAl}. The result below gives the corresponding observation statement for the graph wave equation considered here. Namely, we show that measurements on any zero forcing set determine the initial state for every positive diagonal speed matrix $C$. We include a direct proof in the present setting.
    \begin{prop}[Zero forcing implies observability]
        \label{prop:zero-forcing-observability}
        Let $\Gamma=(V,E)$ be an undirected graph, let $C=\diag(c_1,\dots,c_N)$ with $c_i>0$, and let $B\subseteq V$ be a zero forcing set. Then the pair $(A,H_B)$ associated with the graph wave equation is observable.
    \end{prop}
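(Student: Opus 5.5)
We will use the rank (Hautus) criterion of Theorem~\ref{thm:observability-characterization}, item 3. Suppose, for contradiction, that for some eigenvalue $\lambda$ of $A$ there is a nonzero vector $v=(v_1,v_2)\in\mathbb{C}^{2N}$ with $(A-\lambda I)v=0$ and $H_Bv=0$. From the block structure of $A$, the first row gives $v_2=\lambda v_1$, and the second gives $-C^2Lv_1=\lambda v_2=\lambda^2 v_1$. Writing $w:=v_1$, we get
$$
Lw=\mu C^{-2}w,\qquad \mu:=-\lambda^2,
$$
and $H_Bv=0$ says $w_i=0$ for every $i\in B$. Moreover $w\neq0$, since $w=0$ would force $v_2=\lambda w=0$ and hence $v=0$. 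So it suffices to show that any vector $w$ with $(L-\mu C^{-2})w=0$ and $R_Bw=0$ must vanish. This contradiction will give full column rank for every $\lambda$, and hence observability.

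The key step is a zero-forcing propagation argument applied to the matrix $M:=L-\mu C^{-2}$. Its off-diagonal entries coincide with those of $L$: $M_{ij}=-1$ if $\{i,j\}\in E$ and $0$ otherwise. Its diagonal entries are arbitrary complex numbers, but they never enter the argument.

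Take a zero forcing sequence $S_0=B\subset S_1\subset\cdots\subset S_K=V$. By induction we show that $w$ vanishes on $S_k$. The base case $k=0$ is the hypothesis $R_Bw=0$. For the inductive step, suppose $w|_{S_k}=0$ and that $u\in S_k$ forces $w'\in S_{k+1}\setminus S_k$, so $w'$ is the unique neighbor of $u$ outside $S_k$. Row $u$ of $Mw=0$ reads
$$
M_{uu}w_u-\sum_{j\sim u}w_j=0 .
$$
Here $w_u=0$, and $w_j=0$ for every neighbor $j\neq w'$, since all such neighbors lie in $S_k$. Hence $w_{w'}=0$. Inducting through the sequence gives $w=0$.

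The one point needing care is that $\lambda$, and hence $\mu$ and $w$, may be complex. The argument only uses the sparsity pattern and the nonzero off-diagonal entries of $M$, so it works verbatim over $\mathbb{C}$. The result also does not depend on the particular positive diagonal $C$, nor on connectivity of $\Gamma$.
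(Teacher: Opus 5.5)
Your proof is correct and follows essentially the same route as the paper's. You both apply the Hautus rank test, reduce to a vector $p$ with $C^2Lp=-\lambda^2p$ that vanishes on $B$, and propagate the zeros along a zero forcing sequence using the off-diagonal structure of $L$. The only cosmetic difference is that you absorb $C^{-2}$ into the diagonal of $L-\mu C^{-2}$, whereas the paper divides row $u$ by $c_u^2>0$ to obtain $(Lp)_u=0$.
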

   \begin{proof}
We verify the rank condition in Theorem~\ref{thm:observability-characterization}. Let $\lambda\in\mathbb{C}$ be an eigenvalue of $A$ and suppose that $p,q\in\mathbb{C}^N$ satisfy
$$
(A-\lambda I)\binom{p}{q}=0,
\qquad
H_B\binom{p}{q}=0.
$$
From the definitions of $A$ and $H_B$,
$$
q=\lambda p,
\qquad
-C^2Lp=\lambda q=\lambda^2p,
\qquad
R_Bp=0.
$$
Hence $p_i=0$ for every $i\in B$ and
\begin{equation}
C^2Lp=-\lambda^2p.
\label{eq:zf-observability-eigenvector}
\end{equation}

Choose a zero forcing process starting from $B$ and, if several forces occur at the same stage, order them arbitrarily. Let $S_k$ be the colored set after $k$ individual forces, so that $S_0=B$, $S_K=V$, and
$$
S_{k+1}=S_k\cup\{w\}
$$
whenever $u\in S_k$ forces $w$. We prove inductively that $p_i=0$ for every $i\in S_k$. The assertion holds for $S_0$ because $R_Bp=0$.

Suppose it holds on $S_k$, and let $u\in S_k$ force $w$. Since $p_u=0$, evaluating \eqref{eq:zf-observability-eigenvector} at $u$ gives
$$
c_u^2(Lp)_u=-\lambda^2p_u=0.
$$
Because $c_u>0$, it follows that $(Lp)_u=0$. Moreover, $w$ is the unique neighbor of $u$ outside $S_k$, while every vertex in $S_k$ has zero $p$-component by the induction hypothesis. Therefore
$$
0=(Lp)_u=\deg(u)p_u-\sum_{j\sim u}p_j=-p_w,
$$
and hence $p_w=0$. Thus the zero set propagates from $S_k$ to $S_{k+1}$. By induction, $p=0$ on all of $V$. Since $q=\lambda p$, we also have $q=0$.

Therefore
$$
\ker
\begin{pmatrix}
A-\lambda I\\
H_B
\end{pmatrix}
=\{0\}
$$
for every eigenvalue $\lambda$ of $A$. By the rank condition in Theorem~\ref{thm:observability-characterization}, the pair $(A,H_B)$ is observable.
\end{proof}

    \begin{rmk}
Zero forcing is a sufficient graph-theoretic condition for observability in Proposition~\ref{prop:zero-forcing-observability}, but no converse is asserted. Related uses of zero forcing in the controllability of graph-structured linear systems appear in \cite{BurgarthEtAl,MonshizadehEtAl}. Here, zero forcing provides a condition on the observation set that guarantees recovery of the initial signal. The same graph-theoretic condition will also play a central role in the coefficient-recovery problem, together with an additional nonharmonicity condition on the initial signal.
    \end{rmk}
    
\begin{lemma}
\label{lem:displacement-observability}
For the graph wave system with $X_t(0)=0$, injectivity of the map $f\mapsto R_BX$ on an interval of positive length is equivalent to observability of the associated first-order pair $(A,H_B)$.
\end{lemma}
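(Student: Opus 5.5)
The plan is to relate the restricted problem, with $X_t(0)=0$, to the full first-order system using the symmetry $t\mapsto -t$ of the second-order equation. Write $X(t)=\cos(t\sqrt{C^2L})f$, meaning the even power series $X(t)=\sum_{k\ge0}\frac{(-1)^k t^{2k}}{(2k)!}(C^2L)^k f$. This is the solution of \eqref{eq:graph-wave-equation}, since $e^{At}\binom{f}{0}$ has first component given by exactly this series. Likewise, the solution with initial data $(0,g)$ has first component $Y(t)=\sum_{k\ge0}\frac{(-1)^k t^{2k+1}}{(2k+1)!}(C^2L)^k g$, which is odd in $t$. Since $R_BX$ is real-analytic in $t$, vanishing on an interval of positive length is equivalent to vanishing for all $t$, and also to the vanishing of all its Taylor coefficients at $t=0$. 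So the interval length plays no role, and both conditions reduce to algebraic statements.

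\emph{Observability implies injectivity.} This direction is immediate. If $R_BX\equiv0$ on an interval for the initial state $x(0)=\binom{f}{0}$, then $H_Be^{At}\binom{f}{0}=0$ there. Extending by analyticity to $[0,t_1)$ and applying the definition of observability gives $f=0$.

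\emph{Injectivity implies observability.} Suppose $H_Be^{At}\binom{f}{g}=0$ on an interval. By analyticity, $R_B(X(t)+Y(t))=0$ for all $t\in\mathbb{R}$, where $X$ comes from $f$ and $Y$ from $g$. Splitting into even and odd parts in $t$ gives $R_BX\equiv0$ and $R_BY\equiv0$ separately. Injectivity then yields $f=0$ at once.

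The odd part is the main obstacle, since we must extract $g=0$ from the hypothesis that only concerns displacement data. I would handle it by differentiation. The function $Z:=Y_t$ solves the wave equation with $Z(0)=g$ and $Z_t(0)=Y_{tt}(0)=-C^2LY(0)=0$, and we have $R_BZ=\frac{d}{dt}R_BY\equiv0$. Thus $Z$ is exactly of the displacement type covered by the hypothesis, and injectivity gives $g=0$.

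Hence $e^{At}v$ has vanishing output only for $v=0$, so $(A,H_B)$ is observable. Equivalently, for every $t_1>0$ the Gramian is nonsingular by Theorem~\ref{thm:observability-characterization}. The only care needed is to check that $Y_t$ satisfies the same equation, which holds because the coefficients of \eqref{eq:graph-wave-equation} are constant in time.
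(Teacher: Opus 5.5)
Your proof is correct and is essentially the paper's argument: the paper reads off $R_BK^rf=0$ and $R_BK^rg=0$ from the even and odd Taylor coefficients $H_BA^kx_0=0$, using the block form of $A^{2r}$ and $A^{2r+1}$, and your even/odd splitting together with the passage from $Y$ to $Y_t$ (the displacement solution with initial signal $g$) does the same thing at the level of solutions. Both arguments then conclude $f=g=0$ by applying injectivity to the two displacement-type states $(f,0)^T$ and $(g,0)^T$.
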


\begin{proof}
If $(A,H_B)$ is observable, then by Theorem~\ref{thm:observability-characterization} the initial state is uniquely determined by the measured output. In particular, the map $f\mapsto R_BX$ is injective on the restricted class of initial states $(f,0)^T$.

Conversely, assume that $f\mapsto R_BX$ is injective, and let $x_0=(f,g)^T$ be an unobservable initial state for the first-order system. Since $x_0$ is unobservable,
$$
H_Be^{At}x_0=0
$$
on the observation interval. Differentiating at $t=0$ gives
$$
H_BA^kx_0=0
\qquad
\text{for every }k\geq0.
$$
Set $K=C^2L$. Since
$$
A^{2r}=(-1)^r
\begin{pmatrix}
K^r&0\\
0&K^r
\end{pmatrix},
\qquad
A^{2r+1}=(-1)^r
\begin{pmatrix}
0&K^r\\
-K^{r+1}&0
\end{pmatrix},
$$
it follows that
$$
R_BK^rf=0,
\qquad
R_BK^rg=0,
\qquad
r\geq0.
$$
Using the power-series expansion of $e^{At}$, the first family of identities shows that the solution with initial state $(f,0)^T$ has identically vanishing measured trace. The second family gives the same conclusion for the solution with initial state $(g,0)^T$. Injectivity of $f\mapsto R_BX$ therefore gives $f=g=0$. Hence the only unobservable initial state is the zero state, and $(A,H_B)$ is observable.
\end{proof}
    
	\begin{prop}
		If $(A, H_B)$ is observable, the measurement map $\Phi_B^{C,L}:f\mapsto R_BX$ is injective on any interval of positive length. Moreover, 
		\begin{equation}
			\label{eq:initialsignal}
			x(0) = W_o^{-1}(t_1) \int_0^{t_1} e^{A^T t} H_B^T y(t) \; d t.
		\end{equation}
	\end{prop}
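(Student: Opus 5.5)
The plan has two parts: injectivity on an arbitrary interval, then the explicit formula \eqref{eq:initialsignal} obtained by inverting the Gramian.

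\emph{Injectivity.} The point is that observability is independent of the length of the time window. Suppose $y(t)=H_Be^{At}x_0$ vanishes on some interval $[0,t_1)$ with $t_1>0$. Then every derivative at $t=0$ vanishes:
$$
H_BA^kx_0=0, \qquad k\geq 0.
$$
By the power-series expansion of $e^{At}$, the output $H_Be^{At}x_0$ is then identically zero for all $t$. This holds in particular on whatever finite window is supplied by the definition of observability, so observability forces $x_0=0$.

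Since $\Phi_B^{C,L}$ is linear, this gives injectivity on $[0,t_1)$ for every $t_1>0$ when restricted to initial states $(f,0)^T$, as in Lemma~\ref{lem:displacement-observability}. An interval not starting at $0$ is handled by time translation: the state at the left endpoint is $e^{At_0}x_0$, and $e^{At_0}$ is invertible.

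\emph{Reconstruction.} By the argument above, the Gramian $W_o(t_1)$ is nonsingular for every $t_1>0$, not merely for some $t_1$ as in Theorem~\ref{thm:observability-characterization}. Indeed,
$$
v^TW_o(t_1)v=\int_0^{t_1}\lvert H_Be^{At}v\rvert^2\,dt,
$$
and if this is zero then $H_Be^{At}v\equiv 0$ on $[0,t_1)$, hence $v=0$. To obtain the formula, substitute $y(t)=H_Be^{At}x(0)$ into
$$
\int_0^{t_1}e^{A^Tt}H_B^Ty(t)\,dt
$$
and pull the constant vector $x(0)$ out of the integral. The integral becomes $W_o(t_1)x(0)$, and applying $W_o^{-1}(t_1)$ gives \eqref{eq:initialsignal}. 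The recovered signal $f$ is then the first $N$ components of $x(0)$.

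The only step needing care is the first one, namely that vanishing on a short interval implies vanishing everywhere. This uses the analyticity of $e^{At}$ in $t$, which is what removes the dependence on the particular $t_1$ in the definition of observability. Everything else is direct substitution.
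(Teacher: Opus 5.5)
Your proof is correct and follows essentially the same route as the paper. Analyticity of $t\mapsto H_Be^{At}v$ (your power-series argument) makes the result independent of the window length, and the formula then follows by substituting $y(t)=H_Be^{At}x(0)$ into the Gramian integral; your explicit check that $W_o(t_1)$ is nonsingular for every $t_1>0$, and your time-translation remark, fill in steps the paper leaves implicit.
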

	\begin{proof}
		Since $t\mapsto H_Be^{At}v$ is real analytic, vanishing on any interval of positive length implies vanishing identically. Thus the injectivity statement follows from Lemma \ref{lem:displacement-observability}. Noting that 
		\begin{equation*}
			y(t) = H_Be^{At}x(0), 
		\end{equation*}
		multiplying by $e^{ A^Tt}H_B^T$ and integrating gives
		\begin{equation*}
			\int_0^{t_1} e^{A^T t} H_B^T y(t) \; dt =\left(\int_0^{t_1} e^{A^Tt} H_B^T H_B e^{At} \; dt\right)x(0) = W_o(t_1)x(0). 
		\end{equation*}
		Hence
        \begin{equation*}
			x(0)  = W_o^{-1}(t_1) \int_0^{t_1} e^{A^T t} H_B^T y(t) \; d t.
		\end{equation*}
	\end{proof}
	
    \begin{rmk} A naive dense implementation that approximates the observability Gramian using $K$ time points, evaluates a $2N\times2N$ matrix exponential at each time point, and then forms and solves with the dense Gramian has the crude upper bound $O(K(2N)^3)$. This estimate is implementation-dependent, and exploiting matrix structure or reusing factorizations can reduce the computational cost.
    \end{rmk}

\subsection{Stability under measurement noise}
    Because the reconstruction operator is explicit, its sensitivity to measurement noise can be estimated directly.
    
	Let $n \in L^2(0, t_1; \mathbb{R}^{|B|})$ be a noise signal and suppose 
	\begin{equation*}
		y(t) = H_Be^{At}x(0) + n(t).
	\end{equation*} 
	Applying \eqref{eq:initialsignal} to the perturbed data gives
	\begin{equation*}
		x_e(0) = W_o^{-1}(t_1)\int_0^{t_1} e^{A^T t} H_B^T y(t) \; dt 
	\end{equation*}
	whereas the exact data give
	\begin{equation*}
		x(0) = W_o^{-1}(t_1)\int_0^{t_1} e^{A^T t} H_B^T (y(t) - n(t)) \; dt
	\end{equation*}
	Setting $z := x_e(0) - x(0)$, the goal is to exhibit a bound on $\norm{z}_{\ell^2}$. Begin by noting that
	\begin{equation*}
		W_o(t_1)z = \int_0^{t_1} e^{A^T t} H_B^T n(t) \; dt.
	\end{equation*}
	Taking $\langle z, - \rangle$ of both sides, one has on the one hand that
	\begin{align*}
		\langle z, W_o(t_1) z \rangle & = \left \langle z, \int_0^{t_1} e^{A^Tt}H_B^T n(t) \; dt \right \rangle \\
		& = \int_0^{t_1} \langle z, e^{A^T t}H_B^T n(t) \rangle \; dt \\
		& =  \int_0^{t_1} \langle H_B e^{A t}z,  n(t) \rangle \; dt. 
	\end{align*} 
	By the definition of the Gramian,
	\begin{align*}
		\langle z, W_o(t_1)z \rangle & = z^T W_o(t_1)z \\
		& =  z^T \left( \int_0^{t_1} e^{A^{T} t} H_B^TH_Be^{At} \; dt \right) z \\
		& = \int_0^{t_1} z^T e^{A^{T} t} H_B^TH_Be^{At} z \; dt \\
		& = \int_0^{t_1} \norm{H_B e^{At}z}_{\ell^2}^2 \; dt.
	\end{align*}
	Thus 
	\begin{equation}
		\label{eq:energy-identity-stability}
		\int_0^{t_1} \norm{H_B e^{At}z}_{\ell^2}^2 \; dt = \int_0^{t_1} \langle H_B e^{A t}z,  n(t) \rangle \; dt.
	\end{equation}
	The right-hand side is bounded by 
	\begin{align*}
		\left| \int_0^{t_1} \langle H_B e^{A t}z,  n(t) \rangle \; dt \right| &  \leq \left(\int_0^{t_1} \norm{H_Be^{A t} z}_{\ell^2}^2 \; dt \right)^{1/2}\left( \int_0^{t_1} \norm{n}_{\ell^2}^2 \; dt \right)^{1/2} \\
		&  =  \left(\int_0^{t_1} \norm{H_Be^{A t} z}_{\ell^2}^2 \; dt \right)^{1/2} \norm{n}_{L^2(0, t_1; \mathbb{R}^{|B|})} .
	\end{align*}
	Hence from \eqref{eq:energy-identity-stability}
	\begin{equation*}
		\left(\int_0^{t_1} \norm{H_Be^{A t} z}_{\ell^2}^2 \; dt \right)^{1/2} \leq \norm{n}_{L^2([0, t_1]; \mathbb{R}^{|B|})}.
	\end{equation*}
	Since observability of $(A,H_B)$ implies that $W_o(t_1)$ is positive definite,
	\begin{equation*}
		z^T W_o(t_1)z \geq \lambda_{\min}(W_o(t_1)) \norm{z}_{\ell^2}^2.
	\end{equation*}
	Thus 
	\begin{equation*}
		\lambda_{\min}(W_o(t_1)) \norm{z}_{\ell^2}^2 \leq  \int_0^{t_1} \norm{H_Be^{A t} z}_{\ell^2}^2 \; dt \leq \norm{n}_{L^2(0, t_1; \mathbb{R}^{|B|})}^2
	\end{equation*}
	and therefore
	\begin{equation}
		\norm{x_e(0) - x(0)}_{\ell^2}  \leq \frac{\norm{n}_{L^2(0, t_1; \mathbb{R}^{|B|})}}{\sqrt{\lambda_{\min}(W_o(t_1))}}.
	\end{equation}
	Thus the smallest eigenvalue of the observability Gramian controls the stability of the reconstruction. Small values correspond to states that are weakly visible in the measurements. The reconstruction of such states requires strongly amplifying the measured data and therefore the noise signal. As $\lambda_{\min}(W_o(t_1))$ approaches zero, the estimate degenerates consistently with loss of observability.
\section{Vertex-speed recovery}

\label{sec:speed-recovery}

We next recover the vertex wave speeds $C$ from a known initial signal $f$, known graph geometry $L$, and time-dependent measurements on the observation set. The uniqueness argument is graph-theoretic and uses zero forcing to propagate equality of the coefficients from the measured vertices to the rest of the network. The same mechanism also proves injectivity of the Fr\'echet derivative of the measurement map. Combined with a finite-dimensional stability result, this gives a Lipschitz stability estimate on compact admissible classes. We then formulate a nonlinear least-squares reconstruction method and discuss its conditioning.

\subsection{Uniqueness}

Let $\Gamma$ be an undirected, connected graph with graph Laplacian $L$, and let $f\in\mathbb{R}^N$ be a known initial signal. Fix $T>0$, and suppose
\begin{equation*}
\begin{cases}
X_{tt}(t)=-C^2LX(t) & \text{on } [0,T],\\
X(0)=f, \quad X_t(0)=0.
\end{cases}
\end{equation*}
For an observation set $B\subseteq V$, the corresponding measurement map is
\begin{equation*}
\Phi_{B,T}^{f,L}:(0,\infty)^N
\longrightarrow
L^2(0,T;\mathbb{R}^{|B|}),
\end{equation*}
which maps the vertex speeds to the observed trace $R_BX$.

For the analysis, it is convenient to reparameterize the vertex speeds by
$$
M=\diag(m_1,\ldots,m_N),
\qquad
m_i=c_i^{-2}.
$$
The graph wave equation then takes the equivalent form
\begin{equation}
\label{eq:mass-form}
\begin{cases}
MX_{tt}(t)=-LX(t) & \text{on } [0,T],\\
X(0)=f, \quad X_t(0)=0.
\end{cases}
\end{equation}
This is the discrete counterpart of the continuous mass formulation
$$
m(x)u_{tt}-\Delta u=0,
\qquad
m(x)=c(x)^{-2},
$$
introduced in the Introduction. We refer to the coefficients $m_i=c_i^{-2}$ as the \emph{mass-form variables}.

Recall that $B$ is a zero forcing set if repeated application of the zero forcing rule starting from $B$ eventually reaches every vertex of $\Gamma$. When needed, we write
$$
S_0=B,\qquad S_K=V
$$
for the successive colored sets in a zero forcing sequence, where at each step a vertex $u\in S_k$ forces its unique neighbor $w\notin S_k$, so that
$$
S_{k+1}=S_k\cup\{w\}.
$$

\begin{rmk}
The role of zero forcing here is reminiscent of the convex-foliation condition used for wave-speed recovery in the continuous setting \cite{StefanovUhlmann}. In both cases, uniqueness is propagated from the observed region into the unknown region one layer at a time. A zero forcing sequence provides a discrete mechanism for this propagation through the graph.
\end{rmk}

\begin{rmk}
The injectivity result below requires the nonharmonicity condition
$$
(Lf)_i\neq0
\qquad
\text{for every }i\in V.
$$
This condition is nonrestrictive. For each fixed vertex $i$, the condition $(Lf)_i=0$ defines a proper hyperplane in $\mathbb{R}^N$. Hence the set of initial signals for which $(Lf)_i=0$ at some vertex is a finite union of hyperplanes and therefore has Lebesgue measure zero. Thus the condition holds for almost every $f\in\mathbb{R}^N$. Moreover, $(Lf)_i=0$ is equivalent to
$$
f_i=\frac{1}{\deg(i)}\sum_{j\sim i}f_j,
$$
so $f_i$ is the average of its neighboring values. Thus $(Lf)_i\neq0$ is the discrete analogue of a local nonharmonicity condition. See Theorem 3.3 and Remark 3.2 of \cite{StefanovUhlmann} for an analogous condition in the continuous wave setting.
\end{rmk}

\begin{thm}[Uniqueness]
\label{thm:one-experiment-non-harmonic}

Suppose that $B\subseteq V$ is a zero forcing set of $\Gamma$, and let $T>0$ and $f\in\mathbb{R}^N$ satisfy
$$
(Lf)_i\neq0
\qquad
\text{for every }i\in V.
$$
Let
$$
M=\diag(m_1,\ldots,m_N),
\qquad
\widetilde{M}=\diag(\widetilde{m}_1,\ldots,\widetilde{m}_N),
$$
with $m_i,\widetilde{m}_i>0$ for every $i\in V$, and let $X$ and $\widetilde{X}$ denote the corresponding solutions of \eqref{eq:mass-form} with the same initial signal $f$. If
$$
R_BX(t)=R_B\widetilde{X}(t)
\qquad
\text{for every }t\in[0,T],
$$
then
$$
M=\widetilde{M}.
$$
\end{thm}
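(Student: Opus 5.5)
The plan is to use analyticity of the solutions in $t$: equality of the traces on $[0,T]$ forces equality of every time derivative at $t=0$ on $B$. We then propagate equality of both the solutions and the coefficients along a zero forcing sequence $(S_k)$. Since \eqref{eq:mass-form} is a linear constant-coefficient system, $X$ and $\widetilde X$ are real analytic. Hence $R_BX=R_B\widetilde X$ on $[0,T]$ is equivalent to $R_BX^{(n)}(0)=R_B\widetilde X^{(n)}(0)$ for all $n$, and then to $R_BX\equiv R_B\widetilde X$ on all of $\mathbb{R}$. Writing $K=M^{-1}L$ and $\widetilde K=\widetilde M^{-1}L$, we have $X^{(2r)}(0)=(-1)^rK^rf$ and the odd derivatives vanish. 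It is cleaner, though, to work directly with the functions $X(t)$ on the whole time axis.

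The inductive claim is that for every vertex $i\in S_k$ we have $X_i\equiv\widetilde X_i$ as functions of $t$ and $m_i=\widetilde m_i$.

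For the base case, take $i\in B$. The first claim is the hypothesis. For the coefficient, evaluate the equation at $t=0$: $m_i X_{i,tt}(0)=-(Lf)_i$ and $\widetilde m_i\widetilde X_{i,tt}(0)=-(Lf)_i$. Since $X_{i,tt}(0)=\widetilde X_{i,tt}(0)$ and $(Lf)_i\neq0$, this common value is nonzero, so $m_i=\widetilde m_i$.

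For the inductive step, let $u\in S_k$ force $w$. The $u$-th rows of the two equations read
$$
m_uX_{u,tt}+\deg(u)X_u-\sum_{j\sim u}X_j=0,\qquad
\widetilde m_u\widetilde X_{u,tt}+\deg(u)\widetilde X_u-\sum_{j\sim u}\widetilde X_j=0.
$$
Subtract them. By the induction hypothesis $m_u=\widetilde m_u$, $X_u\equiv\widetilde X_u$, and $X_j\equiv\widetilde X_j$ for every neighbor $j\in S_k$. Since $w$ is the only neighbor of $u$ outside $S_k$, everything cancels except $X_w-\widetilde X_w\equiv0$. Then the same $t=0$ computation at $w$, using $(Lf)_w\neq0$, gives $m_w=\widetilde m_w$. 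After $K$ steps $S_K=V$, so $M=\widetilde M$.

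The main point requiring care is the base case and the coefficient-identification step. There we need $X_{i,tt}(0)$ to be known from the data and nonzero, and it is exactly the nonharmonicity assumption that guarantees $-(Lf)_i/m_i\neq0$. We also need the traces to agree as smooth functions, so that second derivatives at $t=0$ can be compared. Equality on $[0,T]$ with $T>0$ suffices for this, since both traces are smooth up to $t=0$ and one-sided derivatives at $0$ agree. Analyticity is used only to assert equality on a full interval for the subtracted equation; the argument in fact works on $[0,T]$ alone, so analyticity is not needed, and this is why uniqueness holds for every $T>0$.
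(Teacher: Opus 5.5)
Your proposal is correct and follows the paper's proof essentially step for step. Both use the same base case on $B$ via $m_iX_{i,tt}(0)=-(Lf)_i\neq0$, and both use the same induction along a zero forcing sequence, in which the $u$-th row forces $X_w\equiv\widetilde X_w$ and then the $t=0$ computation gives $m_w=\widetilde m_w$. The only differences are cosmetic: the paper packages the subtraction as $MW_{tt}+LW=(M-\widetilde M)\widetilde X_{tt}$ with $W=\widetilde X-X$, and your detour through analyticity is, as you note yourself, unnecessary.
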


\begin{proof}
We proceed along a zero forcing sequence.

\emph{Initialization.}
For each $i\in B$, the equality of the observed traces gives
$$
X_i(t)=\widetilde{X}_i(t)
\qquad
\text{for all }t\in[0,T],
$$
and hence
$$
X_{tt,i}(0)=\widetilde{X}_{tt,i}(0).
$$
Evaluating the two equations at $t=0$ and using $X(0)=\widetilde{X}(0)=f$ gives
$$
m_iX_{tt,i}(0)=-(Lf)_i,
\qquad
\widetilde{m}_i\widetilde{X}_{tt,i}(0)=-(Lf)_i.
$$
Since $(Lf)_i\neq0$, the common acceleration is nonzero, and therefore
$$
m_i=\widetilde{m}_i
\qquad
\text{for every }i\in B.
$$

\emph{Propagation.}
Let
$$
S_0=B,\qquad S_K=V
$$
be the successive colored sets in a zero forcing sequence. Set
$$
W=\widetilde{X}-X.
$$
Subtracting the two equations gives
\begin{equation}
MW_{tt}+LW=(M-\widetilde{M})\widetilde{X}_{tt}.
\label{eq:forced-wave}
\end{equation}

Suppose inductively that
$$
W_j\equiv0
\qquad\text{and}\qquad
m_j=\widetilde{m}_j
$$
for every $j\in S_k$. Let $u\in S_k$ force $w\notin S_k$. Evaluating \eqref{eq:forced-wave} at $u$ and using the induction hypothesis yields
$$
(LW)_u=0
$$
on $[0,T]$. Since $w$ is the unique neighbor of $u$ outside $S_k$, all other neighbors of $u$ belong to $S_k$. Therefore
$$
(LW)_u
=
\deg(u)W_u-\sum_{j\sim u}W_j
=
-W_w.
$$
It follows that
$$
W_w\equiv0
$$
on $[0,T]$, so
$$
X_w(t)=\widetilde{X}_w(t)
\qquad
\text{for all }t\in[0,T].
$$
In particular,
$$
X_{tt,w}(0)=\widetilde{X}_{tt,w}(0).
$$
Evaluating the two equations at $w$ and $t=0$ gives
$$
m_wX_{tt,w}(0)=-(Lf)_w,
\qquad
\widetilde{m}_w\widetilde{X}_{tt,w}(0)=-(Lf)_w.
$$
Since $(Lf)_w\neq0$, the common acceleration is nonzero, and hence
$$
m_w=\widetilde{m}_w.
$$
Thus both equality of the traces and equality of the corresponding coefficients propagate from $S_k$ to $S_{k+1}$. Iterating along the zero forcing sequence gives
$$
M=\widetilde{M}.
$$
\end{proof}

\subsection{Lipschitz stability}

We next establish stability of the vertex-speed recovery with respect to perturbations of the measured trace. We identify a positive diagonal matrix
$$
M=\diag(m_1,\ldots,m_N)
$$
with its diagonal vector in $\mathbb{R}^N$, and set
$$
\mathcal U=(0,\infty)^N,
\qquad
\mathcal K=[m_{\min},m_{\max}]^N\subset\mathcal U,
$$
where
$$
0<m_{\min}<m_{\max}<\infty.
$$
For each $M\in\mathcal U$, let $X_M$ denote the solution of \eqref{eq:mass-form}. With $f$, $L$, $B$, and $T>0$ fixed, define the parameter-to-data map
$$
\Phi_T(M)=R_BX_M
\in L^2(0,T;\mathbb{R}^{|B|}).
$$

We first compute the Fr\'echet derivative of $\Phi_T$. The first-order formulation of \eqref{eq:mass-form} has coefficient matrix
$$
A(M)=
\begin{pmatrix}
0&I_N\\
-M^{-1}L&0
\end{pmatrix},
$$
which depends smoothly on $M\in\mathcal U$. Since solutions of finite-dimensional linear systems depend smoothly on their coefficients, the map $M\mapsto X_M$ is smooth on $\mathcal U$, and hence so is $\Phi_T$.

Fix a diagonal perturbation
$$
H=\diag(h_1,\ldots,h_N),
$$
and define
$$
Z(t)
=
\left.\frac{d}{d\varepsilon}X_{M+\varepsilon H}(t)\right|_{\varepsilon=0}.
$$
Differentiating
$$
(M+\varepsilon H)(X_{M+\varepsilon H})_{tt}
+
LX_{M+\varepsilon H}=0
$$
at $\varepsilon=0$ gives
$$
MZ_{tt}+LZ=-H(X_M)_{tt}.
$$
Since the initial data are independent of $M$, we also have
$$
Z(0)=0,
\qquad
Z_t(0)=0.
$$
Therefore
\begin{equation}
\label{eq:frechet-derivative}
D\Phi_T(M)[H]=R_BZ,
\end{equation}
where $Z$ solves
\begin{equation}
\label{eq:variation-wave-equation}
\begin{cases}
MZ_{tt}+LZ=-H(X_M)_{tt} & \text{on }[0,T],\\
Z(0)=0,\quad Z_t(0)=0.
\end{cases}
\end{equation}

\begin{lemma}[Injectivity of the Fr\'echet derivative]
\label{lemma:frechet-injectivity}
Suppose that $B\subseteq V$ is a zero forcing set of $\Gamma$ and
$$
(Lf)_i\neq0
\qquad
\text{for every }i\in V.
$$
Then $D\Phi_T(M)$ is injective for every $M\in\mathcal U$.
\end{lemma}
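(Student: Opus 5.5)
We will show that if $D\Phi_T(M)[H]=0$, then $H=0$. The argument mirrors the proof of Theorem~\ref{thm:one-experiment-non-harmonic}, but applied to the linearized system \eqref{eq:variation-wave-equation}.

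Fix $M\in\mathcal U$ and a diagonal $H$ with $R_BZ\equiv 0$ on $[0,T]$, where $Z$ solves \eqref{eq:variation-wave-equation}. Write the system componentwise as
$$
m_iZ_{tt,i}+(LZ)_i=-h_i(X_M)_{tt,i}.
$$
The plan is to prove by induction along a zero forcing sequence $S_0=B,\dots,S_K=V$ that, for every $j\in S_k$,
$$
Z_j\equiv 0 \ \text{on } [0,T] \qquad\text{and}\qquad h_j=0 .
$$

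\emph{Extracting $h_j$ from a vanishing trace.} The key step is the following. Suppose $Z_j\equiv0$ on $[0,T]$. Evaluate the $j$-th equation at $t=0$, using $Z(0)=0$, which gives $(LZ)(0)=0$. Since $Z_j\equiv 0$, also $Z_{tt,j}(0)=0$. On the other side, $(X_M)_{tt}(0)=-M^{-1}Lf$. The equation at $t=0$ therefore reduces to
$$
0=h_j\,m_j^{-1}(Lf)_j .
$$
By the nonharmonicity assumption $(Lf)_j\neq0$, and $m_j>0$, so $h_j=0$. This is exactly the mechanism used in the initialization of Theorem~\ref{thm:one-experiment-non-harmonic}. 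It needs only the value at $t=0$, so it works for any $T>0$.

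\emph{Base case.} For $j\in B$ we have $Z_j\equiv0$ by hypothesis, so the step above gives $h_j=0$.

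\emph{Propagation.} Assume the claim holds on $S_k$, and let $u\in S_k$ force $w$. The $u$-th equation has $Z_u\equiv0$ and $h_u=0$, so it reduces to $(LZ)_u\equiv0$ on $[0,T]$. All neighbors of $u$ other than $w$ lie in $S_k$, so
$$
(LZ)_u=\deg(u)Z_u-\sum_{j\sim u}Z_j=-Z_w .
$$
Hence $Z_w\equiv0$, and the first step then gives $h_w=0$. Iterating over the sequence yields $H=0$, which proves injectivity.

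\emph{Potential obstacle.} The forcing term $h_j(X_M)_{tt,j}$ could in principle vanish at some times. This causes no difficulty, because the argument only uses its value at $t=0$, which is nonzero by nonharmonicity. The other point to check is that equalities on $[0,T]$ may be differentiated and evaluated at the endpoint $t=0$. This holds because $Z$ and $X_M$ are real-analytic in $t$, being solutions of constant-coefficient linear ODEs.
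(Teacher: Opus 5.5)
Your proposal is correct and follows the paper's proof: vanishing of $Z_j$ on $[0,T]$, together with $Z(0)=0$ and $(X_M)_{tt}(0)=-M^{-1}Lf$, forces $h_j=0$ by nonharmonicity, and the relation $(LZ)_u=-Z_w$ at each forcing step propagates both identities along the zero forcing sequence. The one step the paper states explicitly and you leave implicit is the passage from $R_BZ=0$ in $L^2(0,T;\mathbb{R}^{|B|})$ to pointwise vanishing, which follows from the continuity (indeed analyticity) of $Z$ that you already note.
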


\begin{proof}
Let $H=\diag(h_1,\ldots,h_N)$ and suppose
$$
D\Phi_T(M)[H]=0.
$$
By \eqref{eq:frechet-derivative},
$$
R_BZ=0
$$
in $L^2(0,T;\mathbb{R}^{|B|})$. Since $Z$ is smooth in time, this implies
$$
Z_i\equiv0
\qquad
\text{for every }i\in B.
$$
Evaluating \eqref{eq:variation-wave-equation} at $i\in B$ and $t=0$ gives
$$
0=-h_i(X_M)_{tt,i}(0).
$$
Since
$$
(X_M)_{tt,i}(0)=-m_i^{-1}(Lf)_i\neq0,
$$
we obtain
$$
h_i=0
\qquad
\text{for every }i\in B.
$$

We now propagate these two identities along a zero forcing sequence. Suppose that
$$
Z_j\equiv0
\qquad\text{and}\qquad
h_j=0
$$
for every $j\in S_k$, and let $u\in S_k$ force $w\notin S_k$. Evaluating \eqref{eq:variation-wave-equation} at $u$ gives
$$
(LZ)_u=0.
$$
Since $w$ is the unique neighbor of $u$ outside $S_k$,
$$
(LZ)_u
=
\deg(u)Z_u-\sum_{j\sim u}Z_j
=
-Z_w.
$$
Hence
$$
Z_w\equiv0.
$$
Evaluating \eqref{eq:variation-wave-equation} at $w$ and $t=0$ then gives
$$
0=-h_w(X_M)_{tt,w}(0).
$$
Again,
$$
(X_M)_{tt,w}(0)=-m_w^{-1}(Lf)_w\neq0,
$$
so $h_w=0$. Iterating along the zero forcing sequence gives $H=0$.
\end{proof}

We use the following stability result of Bourgeois.

\begin{prop}[\cite{Bourgeois}, Theorem 2.1]
Let $\mathcal U$ be an open subset of a finite-dimensional Hilbert space, let $\mathcal K\subset\mathcal U$ be compact and convex, and let
$$
G\in C^1(\mathcal U;\mathcal H)
$$
for a Hilbert space $\mathcal H$. If $G$ is injective on $\mathcal U$ and $DG(M)$ is injective for every $M\in\mathcal U$, then there exists $K_{\mathrm{stab}}>0$ such that
$$
\norm{M-\widetilde M}
\leq
K_{\mathrm{stab}}
\norm{G(M)-G(\widetilde M)}
$$
for every $M,\widetilde M\in\mathcal K$.
\end{prop}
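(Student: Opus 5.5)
The proof is by contradiction, using compactness of $\mathcal K$, and it splits into two cases according to whether the offending pairs of points stay apart or collapse together. Suppose no such constant $K_{\mathrm{stab}}$ exists. Then for every $n\in\mathbb N$ there are $M_n,\widetilde M_n\in\mathcal K$ with $M_n\neq\widetilde M_n$ and
$$
\norm{G(M_n)-G(\widetilde M_n)}<\tfrac1n\norm{M_n-\widetilde M_n}.
$$
Since $\mathcal K$ is compact in a finite-dimensional space, we pass to a subsequence along which $M_n\to M$ and $\widetilde M_n\to\widetilde M$, with $M,\widetilde M\in\mathcal K$.

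\emph{Case $M\neq\widetilde M$.} The right-hand side above is at most $\diam(\mathcal K)/n\to0$. By continuity of $G$, the left-hand side converges to $\norm{G(M)-G(\widetilde M)}$, so this quantity must vanish. Injectivity of $G$ then forces $M=\widetilde M$, which is a contradiction.

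\emph{Case $M=\widetilde M$.} Here we linearize. Because $\mathcal K$ is convex, the segment $\widetilde M_n+s(M_n-\widetilde M_n)$, $s\in[0,1]$, lies in $\mathcal K\subset\mathcal U$. Since $G$ is $C^1$, the fundamental theorem of calculus for Hilbert-space-valued functions gives
$$
G(M_n)-G(\widetilde M_n)=\int_0^1 DG\bigl(\widetilde M_n+s(M_n-\widetilde M_n)\bigr)[M_n-\widetilde M_n]\,ds .
$$
Set $e_n=(M_n-\widetilde M_n)/\norm{M_n-\widetilde M_n}$. These are unit vectors in a finite-dimensional space, so after a further subsequence $e_n\to e$ with $\norm{e}=1$. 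Dividing the contradiction hypothesis by $\norm{M_n-\widetilde M_n}$ yields
$$
\Bigl\|\int_0^1 DG\bigl(\widetilde M_n+s(M_n-\widetilde M_n)\bigr)[e_n]\,ds\Bigr\|<\tfrac1n .
$$

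The key step is passing to the limit inside this integral, and this is where the main (though mild) obstacle lies. The map $DG:\mathcal U\to\mathcal L(\mathbb R^N,\mathcal H)$ is continuous in operator norm, which is automatic here because the domain is finite-dimensional and $G\in C^1$. Hence $DG$ is uniformly continuous on the compact set $\mathcal K$. All points of the segment converge to $M$ uniformly in $s$, since their distance to $M$ is at most $\norm{\widetilde M_n-M}+\norm{M_n-\widetilde M_n}\to0$. Combined with $e_n\to e$, this shows that the integrand converges to $DG(M)[e]$ uniformly in $s$. Therefore $\norm{DG(M)[e]}=0$. This contradicts the injectivity of $DG(M)$, because $e\neq0$.

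Both cases lead to a contradiction, so a constant $K_{\mathrm{stab}}$ exists. We note that the argument uses injectivity of $G$ and of $DG$ only on $\mathcal K$. Convexity is used only to keep the connecting segments inside the region where $G$ is $C^1$ and $DG$ is uniformly continuous.
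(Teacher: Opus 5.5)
Your proof is correct. You split on whether the limits coincide: distinct limits are ruled out by injectivity of $G$, and the diagonal case is ruled out by the integral mean value formula, a limiting unit direction $e$, and injectivity of $DG(M)$. The paper gives no proof of its own and simply quotes Bourgeois's Theorem 2.1, for which this compactness-and-contradiction argument is the standard proof. Your closing remark is also accurate, and in fact convexity can be dropped entirely: for large $n$ both $M_n$ and $\widetilde M_n$ lie in a small ball about $M$ contained in the open set $\mathcal U$, so the connecting segments stay where $G$ is $C^1$.
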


Under the zero forcing and nonharmonicity assumptions, Theorem \ref{thm:one-experiment-non-harmonic} gives injectivity of $\Phi_T$ on $\mathcal U$, while Lemma \ref{lemma:frechet-injectivity} gives injectivity of its derivative. Therefore there exists $K_{\mathrm{stab}}>0$ such that
\begin{equation}
\label{eq:mass-recovery-stability}
\norm{M-\widetilde M}_{\ell^2}
\leq
K_{\mathrm{stab}}
\norm{\Phi_T(M)-\Phi_T(\widetilde M)}_{L^2(0,T;\mathbb{R}^{|B|})}
\end{equation}
for every $M,\widetilde M\in\mathcal K$.

Now suppose that the measured data are
$$
y_e=\Phi_T(M)+n,
$$
where
$$
n\in L^2(0,T;\mathbb{R}^{|B|})
$$
is the measurement noise. Let $M_e\in\mathcal K$ be a reconstruction and define its residual by
$$
r=\Phi_T(M_e)-y_e.
$$
Then
$$
\Phi_T(M_e)-\Phi_T(M)=n+r.
$$
Applying \eqref{eq:mass-recovery-stability} and the triangle inequality gives
\begin{equation}
\label{eq:mass-recovery-stability-noise}
\norm{M-M_e}_{\ell^2}
\leq
K_{\mathrm{stab}}
\left(
\norm{n}_{L^2(0,T;\mathbb{R}^{|B|})}
+
\norm{r}_{L^2(0,T;\mathbb{R}^{|B|})}
\right).
\end{equation}
The estimate can be expressed in terms of the original vertex speeds. Since
$$
c_i=m_i^{-1/2}
$$
and $m_i,m_{e,i}\in[m_{\min},m_{\max}]$, the mean value theorem gives
$$
|c_i-c_{e,i}|
\leq
\frac{1}{2m_{\min}^{3/2}}
|m_i-m_{e,i}|.
$$
Consequently,
\begin{equation}
\label{eq:speed-recovery-stability}
\norm{C-C_e}_{\ell^2}
\leq
\frac{K_{\mathrm{stab}}}{2m_{\min}^{3/2}}
\left(
\norm{n}_{L^2(0,T;\mathbb{R}^{|B|})}
+
\norm{r}_{L^2(0,T;\mathbb{R}^{|B|})}
\right).
\end{equation}
In particular, if the reconstruction fits the measured data exactly, then $r=0$ and
$$
\norm{C-C_e}_{\ell^2}
\leq
\frac{K_{\mathrm{stab}}}{2m_{\min}^{3/2}}
\norm{n}_{L^2(0,T;\mathbb{R}^{|B|})}.
$$
Since $m_{\min}=c_{\max}^{-2}$, this becomes
\begin{equation}
\label{eq:speed-recovery-stability-abstract}
\norm{C-C_e}_{\ell^2}
\leq
\frac{K_{\mathrm{stab}}}{2}c_{\max}^3
\norm{n}_{L^2(0,T;\mathbb{R}^{|B|})}.
\end{equation}

\subsection{Numerical reconstruction}
\label{section:algorithm}

We now describe the reconstruction method used in the numerical experiments of Section \ref{sec:numerics}. Although the uniqueness result above requires only one suitable initial signal, the numerical method allows several experiments, which can improve the conditioning of the reconstruction.

Let
$$
0=t_0<t_1<\cdots<t_K=T
$$
be the observation times. Let
$$
f^{(r)}\in\mathbb{R}^N,
\qquad
r=1,\ldots,R,
$$
be known initial signals, with corresponding measured data
$$
y_q^{(r)}\in\mathbb{R}^{|B|},
\qquad
q=0,\ldots,K.
$$
Assume that the admissible speeds satisfy
$$
0<c_{\min}\leq c_i\leq c_{\max}<\infty,
$$
and set
$$
m_{\min}=c_{\max}^{-2},
\qquad
m_{\max}=c_{\min}^{-2}.
$$

We introduce the logarithmic parameters
$$
\theta_i=\log m_i.
$$
The admissible parameter set is therefore
$$
\Theta=[\log m_{\min},\log m_{\max}]^N,
$$
and the corresponding diagonal coefficient matrix is
$$
M(\theta)=\diag(e^{\theta_1},\ldots,e^{\theta_N}).
$$
For each $\theta\in\Theta$ and each $r=1,\ldots,R$, let $X^{(r)}(\cdot;\theta)$ solve
$$
\begin{cases}
M(\theta)X_{tt}^{(r)}+LX^{(r)}=0 & \text{on }[0,T],\\
X^{(r)}(0)=f^{(r)},\quad X_t^{(r)}(0)=0.
\end{cases}
$$

Let $w_q$ denote the trapezoidal quadrature weights associated with the observation grid. Explicitly,
$$
w_0=\frac{t_1-t_0}{2},
\qquad
w_K=\frac{t_K-t_{K-1}}{2},
$$
and
$$
w_q=\frac{t_{q+1}-t_{q-1}}{2},
\qquad
q=1,\ldots,K-1.
$$
For a uniform grid $t_q=q\Delta t$, these reduce to
$$
w_0=w_K=\frac{\Delta t}{2},
\qquad
w_q=\Delta t,
\quad q=1,\ldots,K-1.
$$

We define the weighted residuals by
\begin{equation}
\label{eq:data-residual}
r_q^{(r)}(\theta)
=
\sqrt{w_q}
\left(
R_BX^{(r)}(t_q;\theta)-y_q^{(r)}
\right),
\end{equation}
and form $r(\theta)$ by stacking these vectors over $r$ and $q$. The factors $\sqrt{w_q}$ ensure that the squared Euclidean norm of the residual is the trapezoidal approximation of the continuous $L^2$ data misfit. The reconstruction problem is
\begin{equation}
\label{eq:discrete-least-squares}
\min_{\theta\in\Theta}
\frac{1}{2}\norm{r(\theta)}_2^2.
\end{equation}

To compute the Jacobian of the residual, define
$$
S_j^{(r)}
=
\frac{\partial X^{(r)}}{\partial\theta_j},
\qquad
S^{(r)}
=
\begin{pmatrix}
S_1^{(r)}&\cdots&S_N^{(r)}
\end{pmatrix}.
$$
Since
$$
\frac{\partial M(\theta)}{\partial\theta_j}
=
m_jE_{jj},
$$
where $E_{jj}$ has a single nonzero entry equal to one in position $(j,j)$, differentiating the forward equation with respect to $\theta_j$ gives
$$
M(\theta)(S_j^{(r)})_{tt}
+
LS_j^{(r)}
=
-m_jE_{jj}X_{tt}^{(r)}.
$$
The initial data are independent of $\theta$, so
$$
S_j^{(r)}(0)=0,
\qquad
(S_j^{(r)})_t(0)=0.
$$
Since
$$
M(\theta)^{-1}m_jE_{jj}=E_{jj},
$$
collecting the sensitivity vectors as columns gives
\begin{equation}
\label{eq:sensitivity-equation}
\begin{cases}
S_{tt}^{(r)}
=
-M(\theta)^{-1}LS^{(r)}
-
\diag\left(X_{tt}^{(r)}\right),\\
S^{(r)}(0)=0,\quad S_t^{(r)}(0)=0.
\end{cases}
\end{equation}
The corresponding blocks of the Jacobian are
$$
J_q^{(r)}(\theta)
=
\sqrt{w_q}\,R_BS^{(r)}(t_q;\theta).
$$

We solve \eqref{eq:discrete-least-squares} using a bounded trust-region reflective method. For background on trust-region methods for nonlinear least-squares problems, see \cite{NocedalWright}. The reconstruction algorithm is as follows.

\begin{enumerate}
    \item Choose an initial estimate $\theta^{(0)}\in\Theta$.

    \item For the current parameter vector $\theta$, solve the forward problem for each initial signal $f^{(r)}$.

    \item Solve the sensitivity equations \eqref{eq:sensitivity-equation} and assemble the residual and Jacobian using \eqref{eq:data-residual}.

    \item Use the residual and Jacobian to compute a bounded trust-region reflective update of $\theta$. Return to Step 2 until a prescribed stopping criterion is satisfied.

    \item If $\widehat{\theta}$ denotes the terminal iterate, recover
    $$
    \widehat{m}_i=e^{\widehat{\theta}_i},
    \qquad
    \widehat{c}_i=e^{-\widehat{\theta}_i/2}.
    $$
\end{enumerate}

The admissible set $\Theta$ is compact, and the forward solution depends continuously on $\theta$. Therefore the objective in \eqref{eq:discrete-least-squares} is continuous and attains a global minimum on $\Theta$. The objective is generally nonconvex, however, so the local optimization method need not reach a global minimizer, and its output may depend on the initialization.

\begin{rmk}
Suppose that the data are exact and generated by some $\theta^\dagger\in\Theta$. For each positive integer $K$, let
$$
\Delta t=\frac{T}{K},
\qquad
t_q=q\Delta t,
\quad q=0,\ldots,K,
$$
and let
$$
w_0=w_K=\frac{\Delta t}{2},
\qquad
w_q=\Delta t,
\quad q=1,\ldots,K-1.
$$
Define the discrete objective
$$
E_K(\theta)
=
\frac{1}{2}
\sum_{r=1}^R
\sum_{q=0}^K
w_q
\norm{
R_BX^{(r)}(t_q;\theta)
-
R_BX^{(r)}(t_q;\theta^\dagger)
}_2^2.
$$
As $K\to\infty$, the functions $E_K$ converge uniformly on $\Theta$ to
$$
E_T(\theta)
=
\frac{1}{2}
\sum_{r=1}^R
\int_0^T
\norm{
R_BX^{(r)}(t;\theta)
-
R_BX^{(r)}(t;\theta^\dagger)
}_2^2
\,dt.
$$
Indeed, the integrands depend jointly continuously on $(t,\theta)$ on the compact set $[0,T]\times\Theta$, and are therefore uniformly continuous there.

Suppose that $B$ is a zero forcing set and that at least one initial signal, say $f^{(r_0)}$, satisfies
$$
(Lf^{(r_0)})_i\neq0
\qquad
\text{for every }i\in V.
$$
If $E_T(\theta)=0$, then the nonnegative continuous integrand corresponding to $f^{(r_0)}$ has zero integral and therefore vanishes on $[0,T]$. Hence
$$
R_BX^{(r_0)}(t;\theta)
=
R_BX^{(r_0)}(t;\theta^\dagger)
\qquad
\text{for every }t\in[0,T].
$$
Theorem \ref{thm:one-experiment-non-harmonic} then gives
$$
M(\theta)=M(\theta^\dagger),
$$
and therefore
$$
\theta=\theta^\dagger.
$$
Thus $\theta^\dagger$ is the unique global minimizer of $E_T$. Consequently, if $\widehat{\theta}_K$ is any global minimizer of $E_K$, then
$$
\widehat{\theta}_K\longrightarrow\theta^\dagger
$$
as $K\to\infty$.

This conclusion concerns global minimizers of the discrete objectives, not necessarily the iterates produced by the local least-squares method. For a fixed $K$, injectivity of the continuous-time measurement map does not by itself imply injectivity of the sampled measurement map
$$
\theta
\longmapsto
\left(
R_BX^{(r)}(t_q;\theta)
\right)_{r,q}.
$$
Using several initial signals can improve the conditioning and identifiability of the sampled problem, but does not guarantee global convergence of the optimization method.
\end{rmk}

The purpose of the present method is to demonstrate the feasibility of the reconstruction and investigate its numerical behavior, rather than to optimize computational efficiency. Developing faster algorithms with stronger convergence guarantees is left for future work.

\section{Extension to positively weighted graphs}
\label{sec:weighted}

The results above extend directly to positively weighted graphs.

\begin{defn}
Let $\Gamma=(V,E)$ be a connected graph, and assign a weight $\omega_{ij}>0$ to each edge $\{i,j\}\in E$. The corresponding weighted graph Laplacian $L_\omega$ is defined by
$$
(L_\omega)_{ii}=\sum_{j\sim i}\omega_{ij},
\qquad
(L_\omega)_{ij}
=
\begin{cases}
-\omega_{ij}, & i\sim j,\\
0, & i\not\sim j.
\end{cases}
$$
The unweighted graph Laplacian corresponds to $\omega_{ij}=1$ for every edge.
\end{defn}

\subsection{Initial-signal recovery}

Suppose that $C$ and $L_\omega$ are known. The graph wave equation becomes
$$
X_{tt}=-C^2L_\omega X,
$$
and the first-order coefficient matrix is
$$
A_\omega=
\begin{pmatrix}
0&I_N\\
-C^2L_\omega&0
\end{pmatrix}.
$$
The observability and reconstruction results of Section \ref{sec:source-recovery} remain unchanged after replacing $L$ by $L_\omega$. In particular, zero forcing still implies observability. Indeed, at a forcing step $u$ forces $w$, and the corresponding weighted Laplacian relation reduces to
$$
0=(L_\omega p)_u=-\omega_{uw}p_w.
$$
Since $\omega_{uw}>0$, it follows that $p_w=0$, and the same zero forcing argument applies.

Thus the reconstruction formula and stability estimate remain valid with $A$ and $W_o$ replaced by their weighted counterparts $A_\omega$ and $W_{o,\omega}$.

\subsection{Vertex-speed recovery}

In the mass-form parametrization, the weighted equation is
$$
MX_{tt}+L_\omega X=0.
$$
The nonharmonicity condition becomes
$$
(L_\omega f)_i\neq0
\qquad
\text{for every }i\in V.
$$
As in the unweighted case, this condition holds for almost every $f\in\mathbb{R}^N$.

The uniqueness proof also carries over directly. At a forcing step $u$ forces $w$, the relation
$$
(LW)_u=-W_w
$$
is replaced by
$$
(L_\omega W)_u=-\omega_{uw}W_w.
$$
Since $\omega_{uw}>0$, vanishing at $u$ again implies $W_w=0$, and equality of the coefficients propagates along the zero forcing sequence. Hence Theorem \ref{thm:one-experiment-non-harmonic} remains valid with $L$ replaced by $L_\omega$.

The same observation applies to the injectivity proof for the Fr\'echet derivative. Consequently, the Lipschitz stability results remain valid for positively weighted graphs, with the stability constant corresponding to the weighted parameter-to-data map.

The numerical reconstruction method likewise extends by replacing $L$ with $L_\omega$ throughout.

\section{Numerical experiments}

    \label{sec:numerics}

We report numerical experiments for the reconstruction method of Section \ref{section:algorithm}. The implementation is publicly available; see the Code availability statement at the end of the paper. The experiments assess the reconstruction in practice and examine how observation geometry, time horizon, coefficient localization, and measurement noise affect numerical conditioning and convergence.

\subsection{Implementation of the speed-recovery algorithm}

    Let $p=\lfloor N/2\rfloor$ and $q=N-p$, so that the $N$ entries are divided into two groups of sizes $p$ and $q$. Using a graph-specific seed, the entries of the zero-sum vector
$$
g=
\left(
\underbrace{1,\ldots,1}_{p\text{ entries}},
\underbrace{-p/q,\ldots,-p/q}_{q\text{ entries}}
\right)
$$
were randomly permuted. The initial condition was then obtained by solving the singular Laplacian equation
    \begin{equation*}
        L\widetilde{f}=g
    \end{equation*}
    on the orthogonal complement of the constant functions. Equivalently, if
    \begin{equation*}
        L\phi_k=\lambda_k\phi_k, \qquad 0=\lambda_0<\lambda_1\leq\cdots,
    \end{equation*}
    then
    \begin{equation*}
        \widetilde{f} = \sum_{\lambda_k>10^{-12}\max\{1,\lambda_{\max}\}}
        \frac{\langle g,\phi_k\rangle}{\lambda_k}\phi_k.
    \end{equation*}
    The mean of $\widetilde{f}$ was removed and the result was normalized:
    \begin{equation*}
        f = \frac{\widetilde{f} -N^{-1}\langle\widetilde{f},\mathbf{1}\rangle\mathbf{1}}{\norm{\widetilde{f} -N^{-1}\langle\widetilde{f},\mathbf{1}\rangle\mathbf{1}}_2}.
    \end{equation*}
    Applying the inverse Laplacian suppresses high-frequency components, so the resulting initial condition has reduced high-frequency variation across the graph. At the same time, $Lf$ is proportional to the permuted vector $g$ and therefore has no zero components. The implementation verified the quantitative nonharmonicity condition
    \begin{equation*}
        \frac{\min_{i\in V}|(Lf) \vert_i|} {\norm{Lf}_2/\sqrt{N}} \geq 0.5.
    \end{equation*}
    For each reference graph, the same initial condition was used across all observation sets, time horizons, and treatments of the speeds on the observation set. 
    
    In the numerical trials, the true speeds belonged to $[0.75,1.25]$, while the optimization was carried out over the larger interval
    \begin{equation*}
        0.5625 \leq c_i \leq 1.5625.
    \end{equation*}
    The corresponding logarithmic bounds were imposed componentwise. When the speeds on $B$ were assumed known, their masses were fixed at their prescribed values and only the parameters on $V\setminus B$ were optimized. Otherwise, all vertex masses were treated as unknown.
    
    No regularization term or robust loss was used in the trials reported here. Consequently, the residual supplied to the optimizer was precisely \eqref{eq:data-residual} over all initial conditions, observation times, and observed vertices.
    
    The state and its parameter sensitivities were integrated simultaneously. The sensitivity equations used by the implementation were those in \eqref{eq:sensitivity-equation}. If the speeds on $B$ were fixed, only the columns corresponding to $V\setminus B$ were passed to the optimizer.
    
    The initial mass estimate was obtained from the short-time curvature of the measured traces. At each observed vertex $b$, the first eight temporal samples were fitted by an even polynomial
    \begin{equation*}
        X^{(r)}(t) \vert_b -f_b^{(r)}
        \approx
        \sum_{k=1}^{3} a_{b,k}^{(r)}t^{2k}.
    \end{equation*}
    Since
    \begin{equation*}
        m_b{X}_{tt}^{(r)}(0) \vert_b = -(Lf^{(r)}) \vert_b,
    \end{equation*}
    the mass at the observed vertex was estimated by
    \begin{equation*}
        m_b^{(0)} = -\frac{(Lf^{(r)}) \vert_b}{2a_{b,1}^{(r)}}.
    \end{equation*}
    Estimates with nearly zero fitted acceleration, poor conditioning, or values outside the admissible interval were discarded. The remaining estimates were combined by their median. Uninitialized vertices were assigned the median reliable observed-vertex estimate, or the geometric midpoint of the mass interval if no reliable estimate was available.
    
    The bounded nonlinear least-squares problem was solved using the trust-region reflective algorithm in \texttt{scipy.optimize.least\_squares}, with Jacobian scaling and the analytic sensitivity matrix described above. The forward and sensitivity systems were integrated in double precision using the eighth-order DOP853 method, with relative and absolute tolerances $10^{-10}$ and $10^{-12}$, respectively. The three least-squares stopping tolerances were set to $10^{-12}$, and the maximum number of forward evaluations was
    \begin{equation*}
        \max\{50000,500N\}.
    \end{equation*}
    
    For the synthetic experiments, the measured traces were generated independently by a generalized modal decomposition of the pair $(L,M)$. The inverse solver nevertheless evaluated its forward map and Jacobian by numerical integration of the state and sensitivity equations. Conditioning was assessed from the singular values of the trapezoidally weighted Jacobian restricted to the free parameters. An effective rank was computed using the relative threshold $10^{-10}$.

\subsection{Small-graph experiments}

    We first tested a collection of fixed graph geometries with $N\leq30$, taking measurements on a minimal zero forcing set computed separately from the graph geometry. The results are reported in Table \ref{tab:low-n-experiments}. In this and all subsequent tables, the truth condition number is the ratio of the largest to smallest singular values of the boundary-data Jacobian evaluated at the ground truth parameters. It quantifies identifiability, with larger values indicating greater sensitivity to perturbations of the observed data. 

    \begin{table}[h!tbp]
    \centering
    \caption{Exact-data reconstruction results for low-$N$ graphs.}
    \label{tab:low-n-experiments}
    \resizebox{\textwidth}{!}{%
    \begin{tabular}{lrrrrrrl}
        \toprule
        Graph
        & $N$
        & \shortstack{Observed\\vertices}
        & \shortstack{Relative\\speed error}
        & \shortstack{Trace\\residual}
        & \shortstack{Truth\\condition}
        & \shortstack{Inversion\\seconds}
        & Recovery \\
        \midrule
        Path & 10 & 1 & $5.56\times10^{-1}$  & $3.91\times10^{-1}$  & 60.06 & 2.22 & Failure \\
        Path & 20 & 1 & $6.42\times10^{-12}$ & $1.33\times10^{-12}$ & 90.63 & 1.77 & Success \\
        \addlinespace
        Cycle & 10 & 2 & $6.93\times10^{-12}$ & $4.40\times10^{-12}$ & 21.32 & 0.41 & Success \\
        Cycle & 20 & 2 & $1.56\times10^{-1}$  & $3.24\times10^{-2}$  & 40.60 & 8.84 & Failure \\
        \addlinespace
        Ladder & 10 & 2 & $2.96\times10^{-1}$  & $1.29\times10^{-1}$  & 8.86  & 1.87 & Failure \\
        Ladder & 20 & 2 & $5.51\times10^{-1}$  & $4.07\times10^{-1}$  & 23.99 & 6.74 & Failure \\
        \addlinespace
        Comb tree & 10 & 5  & $1.41\times10^{-12}$ & $2.18\times10^{-11}$ & 6.22  & 0.34 & Success \\
        Comb tree & 25 & 12 & $8.27\times10^{-12}$ & $3.79\times10^{-11}$ & 10.22 & 0.82 & Success \\
        \addlinespace
        Balanced tree & 15 & 5 & $3.66\times10^{-12}$ & $8.82\times10^{-12}$ & 9.61 & 0.32 & Success \\
        \addlinespace
        Square grid & 16 & 4 & $2.10\times10^{-12}$ & $1.29\times10^{-11}$ & 8.18 & 0.50 & Success \\
        \addlinespace
        Cubic grid & 27 & 9 & $8.82\times10^{-12}$ & $6.74\times10^{-11}$ & 5.25 & 2.29 & Success \\
        \addlinespace
        Erd\H{o}s--R\'enyi $(\bar d=4,\ s=0)$ & 10 & 2 & $2.37\times10^{-12}$ & $2.66\times10^{-12}$ & 13.86 & 0.36 & Success \\
        Erd\H{o}s--R\'enyi $(\bar d=4,\ s=1)$ & 10 & 4 & $3.17\times10^{-12}$ & $2.12\times10^{-11}$ & 7.00  & 0.18 & Success \\
        Erd\H{o}s--R\'enyi $(\bar d=8,\ s=0)$ & 10 & 7 & $6.00\times10^{-12}$ & $6.93\times10^{-11}$ & 3.12  & 0.17 & Success \\
        Erd\H{o}s--R\'enyi $(\bar d=8,\ s=1)$ & 10 & 8 & $1.24\times10^{-1}$  & $3.78\times10^{-1}$  & 1.22  & 0.46 & Failure \\
        Erd\H{o}s--R\'enyi $(\bar d=4,\ s=0)$ & 25 & 9  & $6.89\times10^{-12}$ & $2.90\times10^{-11}$ & 5.80 & 0.33 & Success \\
        Erd\H{o}s--R\'enyi $(\bar d=4,\ s=1)$ & 25 & 8  & $1.46\times10^{-11}$ & $1.68\times10^{-11}$ & 9.45 & 0.37 & Success \\
        Erd\H{o}s--R\'enyi $(\bar d=8,\ s=0)$ & 25 & 14 & $3.07\times10^{-12}$ & $4.91\times10^{-11}$ & 4.84 & 0.36 & Success \\
        Erd\H{o}s--R\'enyi $(\bar d=8,\ s=1)$ & 25 & 14 & $4.58\times10^{-12}$ & $3.20\times10^{-11}$ & 3.73 & 0.35 & Success \\
        \bottomrule
    \end{tabular}%
    }
\end{table}
    \FloatBarrier

    Overall, for small graphs, the algorithm performs quite well. The notable failures are the $10$-node path (on which the minimal zero forcing set is a single end node), the two ``ladder'' type graphs (on which the minimal zero forcing set is the bottom rung), and the $20$-node cycle (on which the minimal zero forcing set is a pair of adjacent nodes), and one of the Erd\H{o}s--Rényi graphs with $10$ nodes. The residuals and relative speed errors for all other trials are of order $-11$ or $-12$, indicating machine precision. The trials that failed suggest that the graph geometry---and thereby the minimal zero forcing set used in the algorithm---exercise the greatest influence with respect to its chances for success.

\subsection{Observation-set and time-horizon experiments}

    The uniqueness theorem does not imply that a minimal zero forcing set is numerically optimal, nor does it guarantee that the local least-squares solver reaches the global minimizer. We therefore compared several observation-set geometries on two-dimensional square and three-dimensional cubic lattice graphs. A single side of the square is a minimal zero forcing set for the square grids considered here, while a single face is minimal for the cubic grids.

    For each square grid, the locally nonharmonic initial condition and the speeds were generated from the fixed graph-specific seed. The inverse algorithm was then run with the observation set consisting of a single side, two perpendicular sides, three sides (the third parallel to the first), and the entire perimeter. The experiments were repeated with widened observation horizons (the parameter corresponding to $T$) with multipliers of $1.5$, $2$, and $4$. These results are recorded in Table \ref{tab:square-progressive-boundary}.

    \begin{table}[h!tbp]
    \centering
    \caption{Progressive observation-set reconstruction results on the square grids.}
    \label{tab:square-progressive-boundary}
    \resizebox{\textwidth}{!}{%
    \begin{tabular}{lrrrrrrl}
        \toprule
        Dimension
        & \shortstack{Square sides\\observed}
        & \shortstack{Horizon\\multiplier}
        & \shortstack{Relative\\speed error}
        & \shortstack{Trace\\residual}
        & \shortstack{Truth\\condition}
        & \shortstack{Inversion\\seconds}
        & Recovery \\
        \midrule
        $10\times 10$ & 1 & 1.0 & $3.71\times 10^{-1}$  & $8.40\times 10^{-2}$  & 116.85 & 562.16   & Failure \\
        $10\times 10$ & 1 & 1.5 & $4.53\times 10^{-1}$  & $7.99\times 10^{-1}$  & 122.39 & 46293.94 & Failure \\
        $10\times 10$ & 1 & 2.0 & $4.57\times 10^{-1}$  & $8.39\times 10^{-1}$  & 149.46 & 1362.95  & Failure \\
        $10\times 10$ & 1 & 4.0 & $4.17\times 10^{-1}$  & $8.00\times 10^{-1}$  & 185.59 & 6772.75  & Failure \\
        \midrule
        $10\times 10$ & 2 & 1.0 & $3.95\times 10^{-2}$  & $8.63\times 10^{-3}$  & 51.15  & 137.75   & Failure \\
        $10\times 10$ & 2 & 1.5 & $3.49\times 10^{-1}$  & $1.25\times 10^{-1}$  & 50.96  & 1784.67  & Failure \\
        $10\times 10$ & 2 & 2.0 & $4.22\times 10^{-1}$  & $9.27\times 10^{-1}$  & 52.37  & 792.92   & Failure \\
        $10\times 10$ & 2 & 4.0 & $3.64\times 10^{-1}$  & $8.79\times 10^{-1}$  & 79.53  & 5724.44  & Failure \\
        \midrule
        $10\times 10$ & 3 & 1.0 & $5.89\times 10^{-11}$ & $5.16\times 10^{-11}$ & 26.50  & 21.92    & Success \\
        $10\times 10$ & 3 & 1.5 & $5.76\times 10^{-11}$ & $5.96\times 10^{-11}$ & 32.10  & 48.25    & Success \\
        $10\times 10$ & 3 & 2.0 & $5.87\times 10^{-11}$ & $6.28\times 10^{-11}$ & 34.76  & 86.99    & Success \\
        $10\times 10$ & 3 & 4.0 & $1.03\times 10^{-1}$  & $8.10\times 10^{-2}$  & 51.83  & 2015.18  & Failure \\
        \midrule
        $10\times 10$ & 4 & 1.0 & $5.32\times 10^{-11}$ & $5.44\times 10^{-11}$ & 19.97  & 15.22    & Success \\
        $10\times 10$ & 4 & 1.5 & $5.86\times 10^{-11}$ & $6.43\times 10^{-11}$ & 24.17  & 34.74    & Success \\
        $10\times 10$ & 4 & 2.0 & $6.24\times 10^{-11}$ & $6.94\times 10^{-11}$ & 26.90  & 58.43    & Success \\
        $10\times 10$ & 4 & 4.0 & $8.46\times 10^{-2}$  & $8.46\times 10^{-2}$  & 39.48  & 4902.14  & Failure \\
        \midrule
        $12\times 12$ & 1 & 1.0 & $1.05\times 10^{-1}$  & $4.22\times 10^{-2}$  & 37.11 & 2087.84  & Failure \\
        $12\times 12$ & 1 & 1.5 & $1.44\times 10^{-1}$  & $5.69\times 10^{-2}$  & 37.10 & 1558.95  & Failure \\
        $12\times 12$ & 1 & 2.0 & $2.53\times 10^{-1}$  & $1.26\times 10^{-1}$  & 39.47 & 5388.30  & Failure \\
        $12\times 12$ & 1 & 4.0 & $3.62\times 10^{-1}$  & $7.97\times 10^{-1}$  & 60.27 & 9710.31  & Failure \\
        \midrule
        $12\times 12$ & 2 & 1.0 & $2.71\times 10^{-11}$ & $3.27\times 10^{-11}$ & 29.61 & 97.90    & Success \\
        $12\times 12$ & 2 & 1.5 & $1.62\times 10^{-1}$  & $7.26\times 10^{-2}$  & 30.87 & 1697.34  & Failure \\
        $12\times 12$ & 2 & 2.0 & $2.58\times 10^{-11}$ & $4.56\times 10^{-11}$ & 33.84 & 1462.93  & Success \\
        $12\times 12$ & 2 & 4.0 & $3.84\times 10^{-1}$  & $7.41\times 10^{-1}$  & 49.07 & 19029.97 & Failure \\
        \midrule
        $12\times 12$ & 3 & 1.0 & $2.42\times 10^{-11}$ & $5.16\times 10^{-11}$ & 18.80 & 52.18    & Success \\
        $12\times 12$ & 3 & 1.5 & $2.52\times 10^{-11}$ & $6.31\times 10^{-11}$ & 21.43 & 220.57   & Success \\
        $12\times 12$ & 3 & 2.0 & $1.48\times 10^{-1}$  & $1.38\times 10^{-1}$  & 24.60 & 6386.34  & Failure \\
        $12\times 12$ & 3 & 4.0 & $3.29\times 10^{-1}$  & $7.98\times 10^{-1}$  & 36.89 & 31753.44 & Failure \\
        \midrule
        $12\times 12$ & 4 & 1.0 & $3.28\times 10^{-11}$ & $8.99\times 10^{-11}$ & 14.22 & 52.73    & Success \\
        $12\times 12$ & 4 & 1.5 & $3.25\times 10^{-11}$ & $1.11\times 10^{-10}$ & 16.27 & 126.38   & Success \\
        $12\times 12$ & 4 & 2.0 & $3.39\times 10^{-11}$ & $1.30\times 10^{-10}$ & 18.00 & 250.75   & Success \\
        $12\times 12$ & 4 & 4.0 & $9.42\times 10^{-2}$  & $1.57\times 10^{-1}$  & 27.37 & 5707.85  & Failure \\
        \bottomrule
    \end{tabular}%
    }
\end{table}
    \FloatBarrier

    For each cubic grid, the locally nonharmonic initial condition and the speeds were generated from the fixed graph-specific seed. The inverse algorithm was then run with the observation set consisting of a single face, two perpendicular faces, three perpendicular faces, four faces (the fourth parallel to the first), five faces (the fifth parallel to the second), and the entire exterior shell. The experiment was repeated with observation horizon widened by a multiplier of $1.5$. These results are recorded in Table \ref{tab:cube-progressive-boundary}.
    
    \begin{table}[h!tbp]
    \centering
    \caption{Progressive observation-set reconstruction results on the cubic grids.}
    \label{tab:cube-progressive-boundary}
    \resizebox{\textwidth}{!}{%
    \begin{tabular}{lrrrrrrl}
        \toprule
        Dimension
        & \shortstack{Faces\\observed}
        & \shortstack{Horizon\\multiplier}
        & \shortstack{Relative\\speed error}
        & \shortstack{Trace\\residual}
        & \shortstack{Truth\\condition}
        & \shortstack{Inversion\\seconds}
        & Recovery \\
        \midrule
        $5\times5\times5$ & 1 & 1.0 & $2.09\times10^{-1}$  & $2.39\times10^{-1}$  & 12.36 & 436.56  & Failure \\
        $5\times5\times5$ & 1 & 1.5 & $2.90\times10^{-1}$  & $7.89\times10^{-1}$  & 12.79 & 1092.49 & Failure \\
        \midrule
        $5\times5\times5$ & 2 & 1.0 & $9.21\times10^{-2}$  & $2.05\times10^{-1}$  & 8.98  & 183.16  & Failure \\
        $5\times5\times5$ & 2 & 1.5 & $2.73\times10^{-1}$  & $7.98\times10^{-1}$  & 9.15  & 917.67  & Failure \\
        \midrule
        $5\times5\times5$ & 3 & 1.0 & $1.72\times10^{-11}$ & $9.02\times10^{-11}$ & 7.20  & 19.87   & Success \\
        $5\times5\times5$ & 3 & 1.5 & $1.74\times10^{-11}$ & $1.16\times10^{-10}$ & 6.99  & 45.67   & Success \\
        \midrule
        $5\times5\times5$ & 4 & 1.0 & $1.44\times10^{-11}$ & $1.07\times10^{-10}$ & 5.37  & 17.06   & Success \\
        $5\times5\times5$ & 4 & 1.5 & $1.65\times10^{-11}$ & $1.31\times10^{-10}$ & 5.35  & 32.49   & Success \\
        \midrule
        $5\times5\times5$ & 5 & 1.0 & $1.33\times10^{-11}$ & $1.12\times10^{-10}$ & 3.98  & 15.43   & Success \\
        $5\times5\times5$ & 5 & 1.5 & $1.50\times10^{-11}$ & $1.41\times10^{-10}$ & 4.11  & 27.61   & Success \\
        \midrule
        $5\times5\times5$ & 6 & 1.0 & $1.53\times10^{-11}$ & $1.37\times10^{-10}$ & 3.17  & 10.36   & Success \\
        $5\times5\times5$ & 6 & 1.5 & $1.66\times10^{-11}$ & $1.71\times10^{-10}$ & 3.13  & 18.12   & Success \\
        \midrule
        $6\times6\times6$ & 1 & 1.0 & $4.10\times10^{-1}$  & $1.24\times10^{0}$   & 21.51 & 4549.71  & Failure \\
        $6\times6\times6$ & 1 & 1.5 & $4.13\times10^{-1}$  & $9.61\times10^{-1}$  & 22.63 & 11483.04 & Failure \\
        \midrule
        $6\times6\times6$ & 2 & 1.0 & $1.72\times10^{-1}$  & $1.46\times10^{-1}$  & 14.70 & 7431.01  & Failure \\
        $6\times6\times6$ & 2 & 1.5 & $2.01\times10^{-1}$  & $1.92\times10^{-1}$  & 15.14 & 15619.02 & Failure \\
        \midrule
        $6\times6\times6$ & 3 & 1.0 & $1.31\times10^{-1}$  & $1.48\times10^{-1}$  & 10.39 & 2734.65  & Failure \\
        $6\times6\times6$ & 3 & 1.5 & $2.64\times10^{-1}$  & $5.27\times10^{-1}$  & 11.06 & 7337.49  & Failure \\
        \midrule
        $6\times6\times6$ & 4 & 1.0 & $4.18\times10^{-11}$ & $1.64\times10^{-10}$ & 6.91  & 84.90    & Success \\
        $6\times6\times6$ & 4 & 1.5 & $4.28\times10^{-11}$ & $1.98\times10^{-10}$ & 7.89  & 152.60   & Success \\
        \midrule
        $6\times6\times6$ & 5 & 1.0 & $3.70\times10^{-11}$ & $1.69\times10^{-10}$ & 5.54  & 79.27    & Success \\
        $6\times6\times6$ & 5 & 1.5 & $3.83\times10^{-11}$ & $2.05\times10^{-10}$ & 6.56  & 145.97   & Success \\
        \midrule
        $6\times6\times6$ & 6 & 1.0 & $3.37\times10^{-11}$ & $1.68\times10^{-10}$ & 4.20  & 77.84    & Success \\
        $6\times6\times6$ & 6 & 1.5 & $3.44\times10^{-11}$ & $2.07\times10^{-10}$ & 4.97  & 132.58   & Success \\
        \bottomrule
    \end{tabular}%
    }
\end{table}
    \FloatBarrier

    In both the square and cubic experiments, additional observation vertices generally improve reconstruction. The results also show a sharp separation between failed runs, with relative errors typically of order $10^{-1}$, and successful runs, which reach errors near $10^{-10}$ or smaller.

    Longer observation horizons also increase the computational cost substantially. In the square-grid experiments, apart from the two-side $12\times12$ case, increasing the horizon tends to reduce the success rate. A plausible explanation is phase accumulation: small frequency errors produce increasing phase mismatch over time, which can introduce additional local minima in the least-squares objective. We did not directly track the evolution of the objective landscape with the horizon, so this explanation should be regarded as heuristic.

\subsection{Localized-speed experiments}

    The cubic progressive observation-set experiments were also repeated on a localized wave-speed field. The localized fields were generated by adding an exponentially decaying bump function to the standard values. The bump was centered at a vertex maximally distant from the observation set and with decay radius $\max\{1, 0.12 \diam(\Gamma)\}$, mixed with the standard speeds at weight $0.2$, and rescaled to the prescribed interval $[0.75, 1.25]$. The result is that most of the relative differences in speed occur in a small region of the graph near the maximally distant vertex. The intent was to create a localized-contrast test case, motivated by imaging settings in which one seeks to identify a spatially localized anomaly. These results are recorded in Table \ref{tab:cube-localized-progressive-boundary}.

    \begin{table}[h!tbp]
    \centering
    \caption{Progressive observation-set reconstruction results for localized speed terms on the cubic grids.}
    \label{tab:cube-localized-progressive-boundary}
    \resizebox{\textwidth}{!}{%
    \begin{tabular}{lrrrrrrl}
        \toprule
        Dimension
        & \shortstack{Faces\\observed}
        & \shortstack{Horizon\\multiplier}
        & \shortstack{Relative\\speed error}
        & \shortstack{Trace\\residual}
        & \shortstack{Truth\\condition}
        & \shortstack{Inversion\\seconds}
        & Recovery \\
        \midrule
        $5\times5\times5$ & 1 & 1.0 & $1.70\times10^{-11}$ & $5.42\times10^{-11}$ & 14.31 & 13.17 & Success \\
        $5\times5\times5$ & 1 & 1.5 & $1.53\times10^{-11}$ & $7.30\times10^{-11}$ & 15.43 & 26.55 & Success \\
        \midrule
        $5\times5\times5$ & 2 & 1.0 & $1.76\times10^{-11}$ & $8.60\times10^{-11}$ & 7.75  & 9.76  & Success \\
        $5\times5\times5$ & 2 & 1.5 & $1.65\times10^{-11}$ & $1.13\times10^{-10}$ & 7.95  & 13.20 & Success \\
        \midrule
        $5\times5\times5$ & 3 & 1.0 & $1.57\times10^{-11}$ & $9.54\times10^{-11}$ & 5.97  & 7.66  & Success \\
        $5\times5\times5$ & 3 & 1.5 & $1.66\times10^{-11}$ & $1.23\times10^{-10}$ & 6.27  & 11.71 & Success \\
        \midrule
        $5\times5\times5$ & 4 & 1.0 & $1.10\times10^{-11}$ & $1.07\times10^{-10}$ & 4.30  & 5.96  & Success \\
        $5\times5\times5$ & 4 & 1.5 & $1.33\times10^{-11}$ & $1.35\times10^{-10}$ & 4.78  & 10.04 & Success \\
        \midrule
        $5\times5\times5$ & 5 & 1.0 & $1.14\times10^{-11}$ & $1.08\times10^{-10}$ & 3.77  & 5.50  & Success \\
        $5\times5\times5$ & 5 & 1.5 & $1.24\times10^{-11}$ & $1.37\times10^{-10}$ & 4.20  & 8.27  & Success \\
        \midrule
        $5\times5\times5$ & 6 & 1.0 & $1.11\times10^{-11}$ & $1.12\times10^{-10}$ & 3.44  & 5.34  & Success \\
        $5\times5\times5$ & 6 & 1.5 & $1.23\times10^{-11}$ & $1.41\times10^{-10}$ & 3.68  & 8.02  & Success \\
        \midrule
        $6\times6\times6$ & 1 & 1.0 & $3.97\times10^{-11}$ & $7.19\times10^{-11}$ & 20.79 & 76.27  & Success \\
        $6\times6\times6$ & 1 & 1.5 & $3.47\times10^{-11}$ & $9.90\times10^{-11}$ & 20.20 & 144.03 & Success \\
        \midrule
        $6\times6\times6$ & 2 & 1.0 & $4.70\times10^{-11}$ & $1.47\times10^{-10}$ & 13.00 & 54.02  & Success \\
        $6\times6\times6$ & 2 & 1.5 & $3.89\times10^{-11}$ & $1.70\times10^{-10}$ & 13.08 & 91.82  & Success \\
        \midrule
        $6\times6\times6$ & 3 & 1.0 & $4.09\times10^{-11}$ & $1.39\times10^{-10}$ & 10.81 & 37.87  & Success \\
        $6\times6\times6$ & 3 & 1.5 & $3.70\times10^{-11}$ & $1.65\times10^{-10}$ & 10.88 & 71.62  & Success \\
        \midrule
        $6\times6\times6$ & 4 & 1.0 & $3.60\times10^{-11}$ & $1.43\times10^{-10}$ & 8.25  & 37.74  & Success \\
        $6\times6\times6$ & 4 & 1.5 & $3.34\times10^{-11}$ & $1.69\times10^{-10}$ & 9.05  & 61.14  & Success \\
        \midrule
        $6\times6\times6$ & 5 & 1.0 & $3.30\times10^{-11}$ & $1.46\times10^{-10}$ & 6.51  & 37.21  & Success \\
        $6\times6\times6$ & 5 & 1.5 & $3.17\times10^{-11}$ & $1.74\times10^{-10}$ & 7.04  & 58.26  & Success \\
        \midrule
        $6\times6\times6$ & 6 & 1.0 & $2.86\times10^{-11}$ & $1.47\times10^{-10}$ & 4.85  & 37.79  & Success \\
        $6\times6\times6$ & 6 & 1.5 & $2.71\times10^{-11}$ & $1.80\times10^{-10}$ & 5.14  & 59.32  & Success \\
        \bottomrule
    \end{tabular}%
    }
\end{table}
    \FloatBarrier

    The localized-speed experiments are markedly easier: every trial in Table \ref{tab:cube-localized-progressive-boundary} converges to machine precision, and the inversion times are substantially lower on the same hardware. Among the successful cases, localization reduces the average computation time by more than one half. The most expensive localized trial is the single-face $6\times6\times6$ experiment with horizon multiplier $1.5$.

    \subsection{Noisy-data experiments}

    We repeated the successful cubic progressive-boundary experiments with noisy measurements to examine the effect of measurement error on the reconstruction. We restrict these tests to configurations for which the noiseless reconstruction converges, so that the effect of measurement noise can be separated from failures of the local optimization method. The four-, five-, and six-face observation sets were tested at relative noise levels of $0.1\%$, $0.5\%$, and $1.0\%$ for both the localized and nonlocalized speed fields. The results are recorded in Table \ref{tab:noisy-cube-progressive-boundary}.

    The noisy measurements were generated by first drawing a raw perturbation with independent entries from $\mathcal{N}(0,1)$. For each trial, this perturbation was generated using a fixed deterministic seed and then rescaled so that
    $$
    \frac{\norm{n}_{L^2(0,T;\mathbb{R}^{|B|})}}{\norm{R_BX}_{L^2(0,T;\mathbb{R}^{|B|})}}
    =\delta,
    $$
    where $\delta\in\{0.001,0.005,0.01\}$. The normalization was applied jointly over all observed vertices and time samples rather than separately at each vertex. Within each trial, the same perturbation direction was used for all three noise levels, with only its magnitude rescaled.

    \begin{table}[h!tbp]
    \centering
    \caption{Noisy-data reconstruction results on the $6\times6\times6$ cubic grid}
    \label{tab:noisy-cube-progressive-boundary}
    \resizebox{\textwidth}{!}{%
    \begin{tabular}{lrrrrrrr}
        \toprule
        \shortstack{Speed\\profile}
        & \shortstack{Faces\\observed}
        & \shortstack{Relative\\noise level}
        & \shortstack{Relative\\speed error}
        & \shortstack{Trace\\residual}
        & \shortstack{Error\\amplification}
        & \shortstack{Truth\\condition}
        & \shortstack{Inversion\\seconds} \\
        \midrule
        Smooth & 4 & 0.1\% & $7.11\times10^{-6}$ & $1.00\times10^{-3}$ & $7.11\times10^{-3}$ & 6.91 & 108.83 \\
        Smooth & 4 & 0.5\% & $3.56\times10^{-5}$ & $5.00\times10^{-3}$ & $7.12\times10^{-3}$ & 6.91 & 287.05 \\
        Smooth & 4 & 1.0\% & $7.12\times10^{-5}$ & $1.00\times10^{-2}$ & $7.12\times10^{-3}$ & 6.91 & 109.47 \\
        Smooth & 5 & 0.1\% & $6.81\times10^{-6}$ & $1.00\times10^{-3}$ & $6.81\times10^{-3}$ & 5.54 & 107.37 \\
        Smooth & 5 & 0.5\% & $3.41\times10^{-5}$ & $5.00\times10^{-3}$ & $6.81\times10^{-3}$ & 5.54 & 293.75 \\
        Smooth & 5 & 1.0\% & $6.81\times10^{-5}$ & $1.00\times10^{-2}$ & $6.81\times10^{-3}$ & 5.54 & 841.07 \\
        Smooth & 6 & 0.1\% & $4.84\times10^{-6}$ & $1.00\times10^{-3}$ & $4.84\times10^{-3}$ & 4.20 & 113.72 \\
        Smooth & 6 & 0.5\% & $2.42\times10^{-5}$ & $5.00\times10^{-3}$ & $4.84\times10^{-3}$ & 4.20 & 260.23 \\
        Smooth & 6 & 1.0\% & $4.84\times10^{-5}$ & $1.00\times10^{-2}$ & $4.84\times10^{-3}$ & 4.20 & 329.67 \\
        \midrule
        Localized & 4 & 0.1\% & $7.62\times10^{-6}$ & $1.00\times10^{-3}$ & $7.62\times10^{-3}$ & 8.25 & 52.10 \\
        Localized & 4 & 0.5\% & $7.92\times10^{-2}$ & $6.81\times10^{-2}$ & $1.58\times10^{1}$  & 8.25 & 660.77 \\
        Localized & 4 & 1.0\% & $7.62\times10^{-5}$ & $1.00\times10^{-2}$ & $7.62\times10^{-3}$ & 8.25 & 1833.13 \\
        Localized & 5 & 0.1\% & $6.58\times10^{-6}$ & $1.00\times10^{-3}$ & $6.58\times10^{-3}$ & 6.51 & 55.75 \\
        Localized & 5 & 0.5\% & $3.29\times10^{-5}$ & $5.00\times10^{-3}$ & $6.58\times10^{-3}$ & 6.51 & 345.66 \\
        Localized & 5 & 1.0\% & $6.58\times10^{-5}$ & $1.00\times10^{-2}$ & $6.58\times10^{-3}$ & 6.51 & 568.70 \\
        Localized & 6 & 0.1\% & $5.74\times10^{-6}$ & $1.00\times10^{-3}$ & $5.74\times10^{-3}$ & 4.85 & 45.41 \\
        Localized & 6 & 0.5\% & $2.87\times10^{-5}$ & $5.00\times10^{-3}$ & $5.74\times10^{-3}$ & 4.85 & 216.65 \\
        Localized & 6 & 1.0\% & $5.74\times10^{-5}$ & $1.00\times10^{-2}$ & $5.74\times10^{-3}$ & 4.85 & 355.29 \\
        \bottomrule
    \end{tabular}%
    }
\end{table}

     \FloatBarrier

    Apart from one anomalous trial, the reconstruction error scales approximately linearly with the noise level. In particular, increasing the noise from $0.1\%$ to $1.0\%$ increases the reconstruction error by roughly one order of magnitude. For a fixed observation set, the error amplification is nearly independent of the noise level and generally decreases as more vertices are observed. The trace residual is also comparable to the prescribed noise level, indicating that the reconstruction fits the data to approximately the accuracy justified by the noise. This behavior is consistent with the Lipschitz stability estimate above, which bounds the reconstruction error by the size of the measurement noise together with the data residual. These experiments are intended as a robustness check rather than a statistical study.

    Taken together, these computations complement the exact identifiability theory by separating structural uniqueness from practical conditioning. Zero forcing, together with the nonharmonicity condition for coefficient recovery, guarantees uniqueness from continuous-time data on any observation interval of positive length, but the observation geometry and time horizon can still have a substantial effect on the conditioning and optimization landscape of the reconstruction. The noisy-data experiments further indicate a roughly linear dependence of the reconstruction error on the measurement noise, consistent with the Lipschitz stability estimate. These observations also highlight the significance of the uniform-in-$h$ inverse stability issue identified in the introduction when passing to spatially refined discretizations.

\FloatBarrier
\section*{Acknowledgments}

\subsection*{Funding}
Amir Moradifam is supported by NSF grant DMS-1953620. 

\subsection*{Conflict of interest}
The authors declare no conflicts of interest.

\subsection*{Use of artificial intelligence}

    OpenAI's ChatGPT and Codex were used during the preparation of this work to assist with the numerical implementation, editorial refinement of the exposition, and identification of potentially relevant literature. The mathematical arguments and proofs were written by the authors, who also verified the statements of results, citations, and reported computations. The authors assume responsibility for all content.

\subsection*{Code availability}

    The Python implementation of the reconstruction algorithm was developed with substantial assistance from OpenAI's ChatGPT and Codex. ChatGPT was used to discuss implementation strategies and diagnose numerical procedures, while Codex was used to generate, modify, debug, and test portions of the source code. The authors inspected the code and checked it against the mathematical specification of the algorithm. The implementation and files used for the reported experiments are available at \url{https://github.com/Oliver-w8/Graph-Wave-Scientific-Solver}.

\printbibliography
\end{document}